\documentclass[12pt]{amsart} 
\usepackage{amsmath,amssymb}
\usepackage{latexsym}
\usepackage{amssymb} 
\usepackage{latexsym}
\usepackage{delarray} 
\usepackage{mathrsfs}

\usepackage[colorlinks]{hyperref}

\setlength{\textwidth}{15.2cm}   
\setlength{\textheight}{22.7cm}  
\setlength{\topmargin}{0mm}
\setlength{\oddsidemargin}{3mm}
\setlength{\evensidemargin}{3mm}
\setlength{\footskip}{1cm}



\numberwithin{equation}{section}
\theoremstyle{plain}
\newtheorem{thm}{Theorem}[section]
\newtheorem{lem}[thm]{Lemma}
\newtheorem{prop}[thm]{Proposition}

\newtheorem{rem}[thm]{Remark}

\newcommand\R{{\mathbb R}}

\newcommand\Rn{{{\mathbb R}^n}}


\newcommand\pa{\partial}

\def\Re{{\rm Re}}

\title[Gevrey well-posedness of the Kirchhoff equation]
{On the Gevrey well-posedness of the Kirchhoff equation}

\author[Tokio Matsuyama]{Tokio Matsuyama}
\address{
Tokio Matsuyama:
 \endgraf
Department of Mathematics
\endgraf
Chuo University
\endgraf
1-13-27, Kasuga, Bunkyo-ku
\endgraf
Tokyo 112-8551
\endgraf
Japan
\endgraf
{\it E-mail address} {\rm tokio@math.chuo-u.ac.jp}
}
\author[Michael Ruzhansky]{Michael Ruzhansky}
\address{
  Michael Ruzhansky:
  \endgraf
  Department of Mathematics
  \endgraf
  Imperial College London
  \endgraf
  180 Queen's Gate, London SW7 2AZ
  \endgraf
  United Kingdom
  \endgraf
  {\it E-mail address} {\rm m.ruzhansky@imperial.ac.uk}
  }

\thanks{
The second author was supported in parts by the
EPSRC grant EP/K039407/1 and EPSRC Leadership Fellowship EP/G007233/1.
}

\subjclass[2010]{Primary 35L40, 35L30; Secondary 35L10, 35L05, 35L75;}
\keywords{Kirchhoff equation; Gevrey class; exterior problem}

\begin{document} 

\begin{abstract}
This paper is devoted to proving the 
almost 
global solvability of the Cauchy problem for the Kirchhoff equation
in the Gevrey space $\gamma^s_{\eta,L^2}$. 
Furthermore, similar results are obtained for the 
initial-boundary value problems in bounded domains and in exterior domains 
with compact boundary. 
\end{abstract}

\maketitle

\section{Introduction} 
\label{sec:1}
G. Kirchhoff proposed the equation 
\[
\pa^2_t u-\left(1+\int_\Omega |\nabla u(t,y)|^2\, dy \right)\Delta u=0 
\quad (t\in \R, \, x\in \Omega)
\]
in his book on mathematical physics in 1876, as a model equation for transversal motion of 
the elastic string, where $\Omega$ is a domain in $\Rn$ 
(see \cite{Kirchhoff}, and for finite dimensional approximation problem, see Nishida 
\cite{Nishida}). 
Since then, it was first in 1940 that Bernstein proved the existence of global in time 
analytic solutions on an interval of real line in his celebrated 
paper \cite{Bernstein}. After him, Arosio and Spagnolo discussed analytic solutions in 
higher spatial dimensions (see \cite{Arosio}), and 
D'Ancona and Spagnolo proved 
analytic well-posedness for the degenerate Kirchhoff equation
(see \cite{Dancona-Invent}, and also Kajitani and Yamaguti \cite{Kajitani-Pisa}). \smallskip

As it is well known, this equation has a Hamiltonian structure,  nevertheless 
it involves a challenging 
problem {\em whether or not, one can prove 
the existence of time global solutions corresponding to data in Gevrey classes, 
$H^\infty$-class or standard Sobolev spaces without smallness condition.}
Up to now, there is no solution to these problems. 

\smallskip
The global existence of quasi-analytic solutions is known, see Ghisi and Gobbino, 
Nishihara, Pohozhaev (\cite{Ghisi,Nishihara,Pohozhaev}). 
Here 
quasi-analytic classes are intermediate ones between the analytic class and the
$C^\infty$-class. Manfrin discussed the time global solutions in Sobolev spaces 
corresponding to non-analytic data having a spectral gap (see \cite{Manfrin-JDE}), 
and a similar result is obtained by Hirosawa (see \cite{Hirosawa}). 
\smallskip

On the other hand, global well-posedness in 
Sobolev space $H^{3/2}$, or $H^2$ with {\em small data} is well established in 
\cite{Callegari,Dancona1,Dancona2,Dancona3,Greenberg,Kajitani,Ext-Matsuyama,
Rend-Matsuyama,Rzymowski,Yamazaki1,Yamazaki2}. There, the classes of small data consist of 
compactly supported functions (see \cite{Greenberg}), or more generally, they are 
characterised by some weight conditions 
(see \cite{Callegari,Dancona1,Dancona2,Dancona3}) or oscillatory integrals 
(see \cite{Kajitani,Heiming,Ext-Matsuyama,Rend-Matsuyama,Racke,Rzymowski,Yamazaki1,Yamazaki2}). 
Recently, the authors 
studied the global well-posedness 
for Kirchhoff systems with small data (see \cite{MR-Liouville}), 
and generalised all the previous results in the framework of small data. 
Here, the class of data in \cite{MR-Liouville} consists of Sobolev space $(H^1)^m$, $m$ being 
the order of system, and 
is characterised by some oscillatory integrals. 
The precise statements of the known results can be found in the survey paper \cite{Isaac}. 
\smallskip

The aim of this paper is to prove 
the almost global existence of solutions 
in Gevrey spaces 
(see Theorem \ref{thm:Gevrey thm}). 
Furthermore, we indicate how to modify the proof to also yield the global existence for the
initial-boundary value problem in
exterior domains, and in 
bounded domains
(see Theorem \ref{thm:Exterior} and Theorem \ref{thm:A}, respectively). 

\smallskip

In this paper we 
consider the Cauchy problem for the 
Kirchhoff equation
\begin{equation}\label{EQ:Kirchhoff}
\left\{
\begin{aligned}
& \pa^2_t u-\left(1+\int_\Rn |\nabla u(t,y)|^2\, dy \right)\Delta u=0, 
& \quad t>0, \quad x\in \Rn,\\
& u(0,x)=u_0(x), \quad \pa_t u(0,x)=u_1(x), &\quad x \in \mathbb{R}^n.
\end{aligned}\right.
\end{equation}
Equation \eqref{EQ:Kirchhoff} has a 
Hamiltonian structure. More precisely, 
let us define the energy{\rm :}
\[
\mathscr{H}(u;t):=\frac12\left\{\|\nabla u(t)\|^2_{L^2}+
\|\pa_t u(t)\|^2_{L^2}\right\}+\frac{1}{4} \|\nabla u(t)\|^4_{L^2}.
\]
Then we have 
\[
\mathscr{H}(u;t)=\mathscr{H}(u;0)
\]
as long as a solution exists (see Lemma 
\ref{lem:Energy}).  
We shall now recall the definition of Gevrey class of $L^2$ type. 
For $s\ge1$, we denote by $\gamma^s_{L^2}=\gamma^s_{L^2}(\Rn)$ the 
Roumieu--Gevrey class of 
order $s$ on $\Rn$:
\[
\gamma^s_{L^2}=\bigcup_{\eta>0}\gamma^s_{\eta,L^2},
\]
where $f$ belong to $\gamma^s_{\eta,L^2}$
if and only if 
\[
\int_\Rn e^{\eta|\xi|^{1/s}}|\widehat{f}(\xi)|^2\, d\xi<\infty,
\]
where $\widehat{f}(\xi)$ stands for the Fourier transform of $f(x)$.
The class $\gamma^s_{L^2}$ is endowed with the 
inductive limit topology.
In particular, if 
$s=1$, then $\gamma^1_{L^2}(\Rn)$ is the class $\mathcal{A}_{L^2}$ 
of the analytic functions on 
$\Rn$. 
We will use the norms
\[
\|f\|_{\gamma^s_{\eta,L^2}}
=\left[\int_\Rn e^{\eta|\xi|^{1/s}}|\widehat{f}(\xi)|^2\, d\xi\right]^{1/2}
\]
and 
\[
\|(f,g)\|_{\gamma^s_{\eta,L^2}\times \gamma^s_{\eta,L^2}}
=\left[\int_\Rn e^{\eta|\xi|^{1/s}}
\left\{|\widehat{f}(\xi)|^2
+|\widehat{g}(\xi)|^2\right\} \, d\xi\right]^{1/2}
\]
for $\eta>0$.

\bigskip

We shall prove here the followings:

\begin{thm} \label{thm:Gevrey thm}
Let $T>0$ and $s>1$. 
Let $M>2$, $R>0$ and denote
$\eta_0(M,R,T)=2s M^2 e^{4M^2}RT^{1+\frac1s}+4M^2.$
If  
the functions
$u_0,u_1\in\gamma^s_{L^{2}}$, for some $\eta>\eta_0(M,R,T)$,
satisfy  conditions
\[
2\mathscr{H}(u;0)<\frac{M^2}{4}-1,
\]
\[
\left\|((-\Delta)^{3/4}u_0,(-\Delta)^{1/4}u_1)
\right\|^2_{\gamma^s_{\eta,L^2}\times \gamma^s_{\eta,L^2}}
\le R,
\]
%
then the Cauchy problem \eqref{EQ:Kirchhoff} admits a unique solution
$u\in C^1([0,T];\gamma_{L^2}^s)$.
\end{thm}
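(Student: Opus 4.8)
The natural approach is the classical one for Kirchhoff-type equations: reduce to a fixed-point argument in which the nonlinearity is frozen. Set $c(t)^2 := 1 + \|\nabla u(t)\|_{L^2}^2$ and, given a candidate coefficient $c(t)$, solve the linear wave equation $\partial_t^2 u - c(t)^2 \Delta u = 0$ with the prescribed data. On the Fourier side this is a family of ODEs in $t$ parametrized by $\xi$, and the solution is a superposition of $\exp(\pm i|\xi|\int_0^t c(\tau)\,d\tau)$. The map $\Phi: c \mapsto (1 + \|\nabla u\|_{L^2}^2)^{1/2}$ then has to be shown to be a contraction on a suitable closed subset of $C([0,T])$. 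I would take the set $K = \{ c \in C([0,T]) : 1 \le c(t)^2 \le M^2,\ \|c'\|_{L^\infty} \le \text{(something controlled by $R$)} \}$; the two hypotheses on the data are exactly what is needed to see that $\Phi$ maps $K$ into itself. The energy identity $\mathscr{H}(u;t) = \mathscr{H}(u;0)$ from Lemma \ref{lem:Energy} gives $\|\nabla u(t)\|_{L^2}^2 \le 2\mathscr{H}(u;0) < M^2/4 - 1$, hence $c(t)^2 < M^2/4 < M^2$, which handles the pointwise bound; differentiating $c(t)^2$ in $t$ produces a term involving $\langle \nabla u, \nabla \partial_t u\rangle$ which must be estimated by the higher-regularity quantity $R$ — this is where the norm of $((-\Delta)^{3/4}u_0, (-\Delta)^{1/4}u_1)$ enters.

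The key technical step, and the reason the Gevrey class appears, is propagating the Gevrey regularity and bounding the loss of the Gevrey exponent $\eta$. The solution operator on the Fourier side multiplies $\widehat{u}_0(\xi)$, $\widehat{u}_1(\xi)$ by oscillatory factors of modulus one, so in principle no regularity is lost; the difficulty is estimating $\|\nabla\partial_t u(t)\|$ and the derivative of $c$, which require differentiating the phase $|\xi|\int_0^t c$ and produce factors that grow in $|\xi|$. The standard device is to conjugate by a weight $e^{\rho(t)|\xi|^{1/s}}$ with a decreasing function $\rho(t)$, $\rho(0) = \eta$, and show that as long as $-\rho'(t)$ dominates the growth rate coming from the coefficient's oscillation, the weighted energy stays bounded. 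The precise requirement is that $\rho$ decreases at a rate controlled by $\|c'\|_{L^\infty} \le$ (bound in terms of $R$), and $\rho(T) > 0$ is guaranteed precisely by the hypothesis $\eta > \eta_0(M,R,T)$, with the constant $2sM^2e^{4M^2}RT^{1+1/s}$ being the total decrease of $\rho$ over $[0,T]$ and $4M^2$ the residual margin. I would carry this out as an a priori estimate first, establishing that on $K$ the solution lies in $C^1([0,T]; \gamma^s_{\eta',L^2})$ for $\eta' \ge \rho(T) > 0$ with norm bounded in terms of $R$, $M$.

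With these estimates in hand the fixed-point is routine: one shows $\Phi$ is Lipschitz on $K$ with constant $< 1$ by subtracting the equations for two coefficients $c_1, c_2$, obtaining a linear equation for the difference $u_1 - u_2$ with a forcing term proportional to $(c_1^2 - c_2^2)\Delta u_2$, and estimating in the (lower-order) energy norm — here one exploits that $T$ can be absorbed, or more precisely that the Lipschitz constant carries a factor that is small because the relevant quantities are controlled, using again that $\eta > \eta_0$ leaves room. Uniqueness follows from the same difference estimate applied to two genuine solutions. Finally $u \in C^1([0,T]; \gamma^s_{L^2})$ since $\gamma^s_{L^2} = \bigcup_{\eta>0}\gamma^s_{\eta,L^2}$ and we have shown membership in $\gamma^s_{\eta',L^2}$ for a positive $\eta'$.

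The main obstacle I expect is the bookkeeping in the weighted energy estimate: getting the explicit constant $\eta_0(M,R,T)$ right requires tracking how the pointwise bound $c^2 \le M^2$ and the oscillation bound on $c'$ feed into the decay rate of $\rho(t)$, and in particular producing the factor $e^{4M^2}$ (which presumably comes from a Gronwall step where the wave speed enters the exponent) and the power $T^{1+1/s}$ (from integrating $|\xi|^{1/s}$-type weights against a $t$-growth). The design of the right comparison function $\rho(t)$ — ensuring simultaneously that the weighted energy is non-increasing and that $\rho$ stays positive up to time $T$ — is the heart of the matter; everything else is a standard contraction-mapping wrapper around it.
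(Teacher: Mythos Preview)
Your overall strategy --- freeze the coefficient, solve the linear problem, and close a fixed-point argument on $c(t)$ --- matches the paper's, but there is a genuine gap in your invariance step. You invoke Lemma~\ref{lem:Energy} to bound $\|\nabla u(t)\|_{L^2}^2$ and hence $\tilde c(t)^2$, but that lemma is the Hamiltonian conservation for the \emph{nonlinear} Kirchhoff equation; for the linear problem $\partial_t^2 u - c(t)^2\Delta u = 0$ with an arbitrary $c\in K$, $\mathscr{H}(u;t)$ is not conserved, and the linear energy can grow by a factor $\exp(2\int_0^t |c'|/c)$, which is not controlled by $M^2/4$. The paper resolves this by a bootstrap/continuation argument: since $\tilde c(0)\le M/2$, continuity gives $\tilde c\le M$ on a short interval $[0,t_1]$; one applies the fixed-point theorem \emph{there} to obtain a genuine nonlinear solution, then legitimately uses Hamiltonian conservation to get $\tilde c(t_1)\le M/2$, and iterates. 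Without this device you cannot show $\Phi(K)\subset K$.

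A second difference: the paper uses the Schauder--Tychonoff theorem, not a contraction. It shows the coefficient class $\mathscr{K}(T)=\{c:1\le c\le M,\ |c'(t)|\le K_0/(T-t)^q\}$ is compact in $L^\infty_{\mathrm{loc}}([0,T))$ (Arzel\`a--Ascoli) and that $\Theta$ is merely continuous, via an energy estimate on the difference. Your claim that the Lipschitz constant is $<1$ is unsupported --- there is no smallness of data or of $T$ here, and the slack $\eta-\eta_0$ does not obviously produce a small factor in the difference estimate. Schauder sidesteps this entirely. Finally, the linear Gevrey estimate in the paper (Proposition~\ref{prop:G-Colombini}) is not your moving-weight $e^{\rho(t)|\xi|^{1/s}}$ argument but the Colombini--Del~Santo--Kinoshita regularisation $c_*(t,\xi)$, frozen near $t=T$ at a $\xi$-dependent scale; this is what produces the precise constant $\eta_0=2K_0/(q-1)+4M^2$ with $K_0=M^2e^{4M^2}RT^q$ and $q=1+1/s$, giving the stated $T^{1+1/s}$ power. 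Your $\rho(t)$ approach is a plausible alternative, but you should expect different constants and would still need to address the two issues above.
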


We note that Theorem \ref{thm:Gevrey thm} does not seem to require the
smallness of data. In fact, $M$ and $R$ (measuring the size of the data) are
allowed to be large. However, it follows that $\eta$ (measuring the regularity of the
data) then also have to be big. So, we can informally describe conditions of
Theorem \ref{thm:Gevrey thm} that `the larger the data is the more regular it has to be'.

We can also make the following observation concerning the statement of
Theorem \ref{thm:Gevrey thm}.

\begin{rem}\label{REM:eta0}
The formula for $\eta_{0}(M,R,T)$ in Theorem \ref{thm:Gevrey thm}
comes from condition \eqref{EQ:q} with $s$ and $q$ related by 
\eqref{EQ:s}.
The proof actually yields a more precise conclusion, namely, that the 
solution $u$ from Theorem \ref{thm:Gevrey thm}
satisfies
$$
u \in \bigcap^1_{j=0}\, 
C^j \left([0,T];(-\Delta)^{-(3/4)+(j/2)}\gamma_{\eta^\prime,L^2}^s 
\cap (-\Delta)^{-(1/2)+(j/2)}
\gamma^s_{\eta^\prime,L^2} \right),
$$
with 
\begin{equation}\label{EQ:eta-prime2}
\eta^\prime
=\eta-\eta_0(M,R,T)>0.
\end{equation}
This and the order $\eta^{\prime}$ in \eqref{EQ:eta-prime2} can be found from
\eqref{EQ:Gevrey data} and \eqref{EQ:eta-prime} with 
$s$ and $q$ related by 
\eqref{EQ:s}.
\end{rem}

This paper is organised as follows; in \S \ref{sec:2} energy 
estimates for linear equations with time-dependent coefficients will be derived, 
and these estimates will be applied to get a priori estimates. 
Sections \ref{sec:3} will be devoted to proof of 
Theorems \ref{thm:Gevrey thm}.
In \S \ref{sec:4} some results on global well-posedness 
for the initial-boundary value problems will be discussed. \\

\noindent
{\bf Acknowledgements.} The authors would like to express their sincere 
gratitude to Professors Taeko Yamazaki, Kenji Nishihara, Makoto Nakamura 
and Doctor Tsukasa Iwabuchi for fruitful discussions. The authors would like to 
thank also to Professors Kiyoshi Mochizuki, Hiroshi Uesaka and Masaru Yamaguchi 
for giving them many useful advices.


\section{Energy estimates for linear equation}
\label{sec:2}
In this section we shall derive energy 
estimates for solutions
of the linear Cauchy problem with 
time-dependent coefficients.
These estimates will be fundamental tools 
in the proof of the theorems.  

\smallskip

Let us consider the linear Cauchy problem
\begin{equation} \label{EQ:Linear}
\left\{
\begin{aligned}
& \pa^2_t u-c(t)^2\Delta u=0, & \quad 
t\in (0,T), \quad x\in \Rn,\\
& u(0,x)=u_0(x), \quad \pa_t u(0,x)=u_1(x), 
&\quad x \in \mathbb{R}^n.
\end{aligned}\right.
\end{equation} 
The assumptions for the following estimates are related with 
Theorem 2 from Colombini, Del Santo and Kinoshita \cite{Colombini}.
However, here we need more precise conclusions on the behaviour of constants.
\begin{prop}  
\label{prop:G-Colombini}
Let $\sigma\ge1$ and $1\le s<q/(q-1)$ 
for some $q>1$. 
Assume that $c=c(t)\in 
\mathrm{Lip}_{\mathrm{loc}}([0,T))$ satisfies 
\begin{equation}\label{EQ:G-hyp1}
m_0\le c(t) \le M, \quad t\in [0,T],
\end{equation}
\begin{equation}\label{EQ:G-hyp2}
\left|c^\prime(t)\right|\le 
\frac{K}{(T-t)^q}, \quad a.e.\, t\in [0,T),
\end{equation}
for some $0<m_0<M$ and $K>0$.
If $((-\Delta)^{\sigma/2}u_0,
(-\Delta)^{(\sigma-1)/2}u_1)
\in \gamma^s_{\eta,L^2}
\times \gamma^s_{\eta,L^2}$ for some 
$\eta$ satisfying
\begin{equation}\label{EQ:eta}
\eta>\frac{2Km^{-1}_0}{q-1}+4M^2m^{-1}_0,
\end{equation}
then 
the Cauchy problem \eqref{EQ:Linear} admits 
a unique solution 
$$u\in 
\displaystyle{\bigcap_{j=0}^1}\, 
C^j\left([0,T];
(-\Delta)^{-(\sigma-j)/2}\gamma^s_{\eta^\prime,L^2}\right)$$
such that
\begin{align}\label{EQ:G-interval}
& m_0^2 \|(-\Delta)^{\sigma/2}u(t)\|^2_{\gamma^s_{\eta^\prime,L^2}}
+\|(-\Delta)^{(\sigma-1)/2}\pa_t u(t)\|^2_{\gamma^s_{\eta^\prime,L^2}}\\
\le& \max\{M^2,1\}e^{4M^2m^{-1}_0\max\{1,T^{1-(qs-s)}\}}
\|((-\Delta)^{\sigma/2}u_0,(-\Delta)^{(\sigma-1)/2}u_1)\|^2_{\gamma^s_{\eta,L^2}
\times \gamma^s_{\eta,L^2}}
\nonumber
\end{align}
for $t\in [0,T]$, where 
\[
\eta^\prime
=\eta-\left(\frac{2Km^{-1}_0}{q-1}+4M^2m^{-1}_0\right)>0.
\]
\end{prop}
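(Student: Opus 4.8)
The plan is to pass to the Fourier side, reduce the assertion to a family of scalar second-order ordinary differential equations indexed by $\xi\in\Rn$, establish a pointwise-in-$\xi$ energy bound, and then integrate. Setting $w:=(-\Delta)^{(\sigma-1)/2}u$, which solves \eqref{EQ:Linear} with data $(-\Delta)^{(\sigma-1)/2}u_0$, $(-\Delta)^{(\sigma-1)/2}u_1$, and observing that $\widehat{(-\Delta)^{\sigma/2}u}=|\xi|\,\widehat w$, one reduces matters to the case $\sigma=1$. Writing $v(t,\xi):=\widehat w(t,\xi)$, the equation becomes
\[
v''+c(t)^2|\xi|^2v=0,\qquad v(0,\xi)=\widehat{w_0}(\xi),\quad v'(0,\xi)=\widehat{w_1}(\xi),
\]
which, because $c\in\mathrm{Lip}_{\mathrm{loc}}([0,T))$ is bounded on $[0,T]$, has a unique $C^2$ solution on $[0,T)$ for each fixed $\xi$. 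It then suffices to prove, for all $t\in[0,T]$ and $\xi\in\Rn$, the pointwise bound
\[
m_0^2|\xi|^2|v(t,\xi)|^2+|v'(t,\xi)|^2\le
\max\{M^2,1\}\,e^{4M^2m_0^{-1}\max\{1,T^{1-(qs-s)}\}}\,
e^{\left(\frac{2Km_0^{-1}}{q-1}+4M^2m_0^{-1}\right)|\xi|^{1/s}}\,\bigl(|\widehat{w_1}(\xi)|^2+|\xi|^2|\widehat{w_0}(\xi)|^2\bigr);
\]
indeed, multiplying by $e^{\eta^\prime|\xi|^{1/s}}$, integrating in $\xi$, and using the identity $\eta^\prime+\frac{2Km_0^{-1}}{q-1}+4M^2m_0^{-1}=\eta$ turns the right-hand side into the data norm in \eqref{EQ:G-interval}, while $m_0\le c(t)$ identifies the left-hand side with the weighted quantity there; the factor $\max\{M^2,1\}$ comes from $c(0)\le M$.

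The heart of the proof is this pointwise bound, and the obstruction is that the natural energy $E(t,\xi):=|v'(t,\xi)|^2+c(t)^2|\xi|^2|v(t,\xi)|^2$ satisfies $E'=2cc'|\xi|^2|v|^2$, hence $|E'|\le 2m_0^{-1}|c'|\,E$, but \eqref{EQ:G-hyp2} with $q>1$ makes $\int_0^T|c'|=+\infty$, so the Gronwall estimate this yields degenerates as $t\uparrow T$. I would circumvent this by splitting $[0,T]$ at the frequency-dependent time $t_\xi:=\max\{0,\ T-|\xi|^{-1/(s(q-1))}\}$. On $[0,t_\xi]$ the crude estimate is now admissible: by \eqref{EQ:G-hyp2}, $\int_0^{t_\xi}|c'|\le\frac{K}{q-1}(T-t_\xi)^{-(q-1)}\le\frac{K}{q-1}|\xi|^{1/s}$, whence $E(t,\xi)\le E(0,\xi)\,e^{2Km_0^{-1}(q-1)^{-1}|\xi|^{1/s}}$ for $t\le t_\xi$, which accounts for the $2Km_0^{-1}/(q-1)$ loss. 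On $[t_\xi,T]$, an interval of length at most $|\xi|^{-1/(s(q-1))}$, I would discard $c'$ entirely and compare with the frozen energy $F(t,\xi):=|v'(t,\xi)|^2+c(t_\xi)^2|\xi|^2|v(t,\xi)|^2$: writing $v''=-c(t_\xi)^2|\xi|^2v+\bigl(c(t_\xi)^2-c(t)^2\bigr)|\xi|^2v$ and using only $0<c\le M$ and $c(t_\xi)\ge m_0$, a one-line computation gives $|F'|\le M^2m_0^{-1}|\xi|\,F$, so $F$ grows over $[t_\xi,T]$ by at most $\exp\bigl(M^2m_0^{-1}|\xi|^{\,1-1/(s(q-1))}\bigr)$. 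Here the hypothesis $1\le s<q/(q-1)$ is used decisively, being equivalent to $1-\tfrac{1}{s(q-1)}\le\tfrac1s$: for $|\xi|\ge1$ this exponent is then at most $M^2m_0^{-1}|\xi|^{1/s}$, while a short case analysis over the ranges $T^{-s(q-1)}<|\xi|<1$ (still split at $t_\xi$) and $|\xi|\le T^{-s(q-1)}$ (where $t_\xi=0$, so $F$ is used on all of $[0,T]$) produces the $T$-dependent constant $e^{4M^2m_0^{-1}\max\{1,T^{1-(qs-s)}\}}$. Since $F(t_\xi,\xi)=E(t_\xi,\xi)$ and $m_0^2|\xi|^2|v|^2+|v'|^2\le\min\{E(t,\xi),F(t,\xi)\}$, chaining the two estimates yields the pointwise bound.

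Finally, for existence, uniqueness and regularity: the pointwise bound shows that $v(\cdot,\xi)$ and $v'(\cdot,\xi)$ remain bounded on $[0,T)$, hence so does $v''=-c^2|\xi|^2v$, so $v(\cdot,\xi)$ extends to a $C^1([0,T])$ function. Reversing the substitution, the bound controls $u$ and $\partial_t u$, uniformly in $t\in[0,T]$, in the norms defining $(-\Delta)^{-\sigma/2}\gamma^s_{\eta^\prime,L^2}$ and $(-\Delta)^{-(\sigma-1)/2}\gamma^s_{\eta^\prime,L^2}$ respectively, and continuity in $t$ with values in these spaces follows from this uniform domination together with the fibrewise continuity of $v$, by dominated convergence; this gives $u\in\bigcap_{j=0}^1C^j\bigl([0,T];(-\Delta)^{-(\sigma-j)/2}\gamma^s_{\eta^\prime,L^2}\bigr)$. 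Uniqueness is immediate, since the difference of two such solutions satisfies the same ODE with zero data for every $\xi$ and therefore vanishes. I expect the main obstacle to be precisely the frozen-energy step on $[t_\xi,T]$: that is where the non-integrable singularity of $c'$ at $t=T$ is absorbed into a fixed loss of Gevrey regularity, and the bookkeeping needed to have the constants come out exactly as in \eqref{EQ:eta} and \eqref{EQ:G-interval}, rather than merely up to implicit constants, is the refinement over Theorem 2 of \cite{Colombini} signalled in the text.
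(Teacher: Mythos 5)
Your proposal is correct and is essentially the paper's own argument: the paper introduces the approximate coefficient $c_*(t,\xi)$, frozen on the tail interval $(T-|\xi|^{-1/(qs-s)},T]$, and runs a single energy weighted by $\exp(-\int_0^t\alpha)$ where $\alpha$ collects exactly your two losses — the Gronwall factor $2|c'|/c$ on $[0,T-|\xi|^{-1/(qs-s)}]$ giving $\frac{2Km_0^{-1}}{q-1}|\xi|^{1/s}$, and the perturbation factor $2Mm_0^{-1}|c_*-c||\xi|$ on the tail giving $O(M^2m_0^{-1}|\xi|^{1-1/(qs-s)})$, absorbed via $1-1/(qs-s)<1/s$. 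Your chaining of the two energies $E$ and $F$ at $t_\xi$ is only a cosmetic repackaging of this, with the same threshold, the same case analysis in $|\xi|$ versus $T^{-(qs-s)}$, and the same constants.
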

\begin{proof}
Let $v=v(t,\xi)$ be a solution of the Cauchy problem 
\[
\left\{
\begin{aligned}
& \pa^2_t v+c(t)^2|\xi|^2v=0, & \quad t\in (0,T),\\
& v(0,\xi)=\widehat{u}_0(\xi), \quad \pa_t v(0,\xi)=\widehat{u}_1(\xi). & {} 
\end{aligned}
\right.
\]
We define 
\[
c_*(t,\xi)=
\begin{cases}
c(T) & \quad \text{if $T|\xi|^{1/(qs-s)}\le1$,}\\
c(t) & \quad \text{if $T|\xi|^{1/(qs-s)}>1$ and $0\le t\le T-|\xi|^{-1/(qs-s)}$,}\\
c(T-|\xi|^{-1/(qs-s)}) & \quad \text{if $T|\xi|^{1/(qs-s)}>1$ and $T-|\xi|^{-1/(qs-s)}<t\le T$,}
\end{cases}
\]
and 
\[
\alpha(t,\xi)=2Mm^{-1}_0|c_*(t,\xi)-c(t)||\xi|
+\dfrac{2|c^\prime_*(t,\xi)|}{c_*(t,\xi)}.
\]
We adopt an energy for $v$ as 
\[
E(t,\xi)=\left\{
|v^\prime(t)|^2+c_*(t,\xi)^2 |\xi|^2|v(t)|^2
\right\}k(t,\xi),
\]
where 
\[
k(t,\xi)=|\xi|^{2(\sigma-1)}\exp\left(-\int^t_0 \alpha(\tau,\xi)\, d\tau+\eta|\xi|^{1/s}\right)
\]
and $\eta$ is as in \eqref{EQ:eta}.
We put
\[
\mathcal{E}(t)=\int_\Rn E(t,\xi)\, d\xi.
\]
Hereafter we concentrate on estimating the integral of $\alpha(t,\xi)$.
When $T|\xi|^{1/(qs-s)}\le1$, we can estimate, by using assumption
\eqref{EQ:G-hyp1}
on $c(t)$, 
\begin{align}\label{EQ:IMP1}
\int^t_0\alpha(\tau,\xi)\, d\tau
=& \int^{T}_0 2Mm^{-1}_0|c_*(\tau,\xi)-c(\tau)||\xi|\, d\tau\\
\le& 4M^2m^{-1}_0T|\xi|
\nonumber\\ 
\le& 4M^2 m_0^{-1}T^{1-(qs-s)},
\nonumber
\end{align}
and when $T|\xi|^{1/(qs-s)}>1$, we can estimate
\begin{align}\nonumber
\int^t_0\alpha(\tau,\xi)\, d\tau
\le& \int^{T-|\xi|^{-1/(qs-s)}}_0 \frac{2|c^\prime(\tau)|}{c(\tau)}\, d\tau
+\int^T_{T-|\xi|^{-1/(qs-s)}}2Mm^{-1}_0|c_*(\tau,\xi)-c(\tau)||\xi|\, d\tau\\
\le& \int^{T-|\xi|^{-1/(qs-s)}}_0 \frac{2Km^{-1}_0}{(T-\tau)^q}\, d\tau
+4M^2m^{-1}_0|\xi|^{1-1/(qs-s)}
\label{EQ:IMP2}\\
\le& \frac{2Km^{-1}_0|\xi|^{1/s}}{q-1}+4M^2m^{-1}_0|\xi|^{1-1/(qs-s)}.
\nonumber
\end{align}
Since $1-1/(qs-s)<1/s$, it follows that
\[
|\xi|^{1-1/(qs-s)}\le 1+|\xi|^{1/s}.
\]
Consequently, we get
\[
k(t,\xi)\ge e^{-4M^2m^{-1}_0\max\{1,T^{1-(qs-s)}\}}
|\xi|^{2(\sigma-1)}
e^{\left(\eta-\frac{2Km^{-1}_0}{q-1}-4M^2m^{-1}_0 \right)|\xi|^{1/s}},
\]
and hence,
\begin{multline}\label{EQ:G-Energy1}
\mathcal{E}(t)\ge 
e^{-4M^2m^{-1}_0\max\{1,T^{1-(qs-s)}\}} \times \\
\int_\Rn e^{\left(\eta-\frac{2Km^{-1}_0}{q-1}-4M^2m^{-1}_0\right)|\xi|^{1/s}}
|\xi|^{2(\sigma-1)}\{m^2_0 |\xi|^2|v(t)|^2+|v^\prime(t)|^2\}\, d\xi.
\end{multline}

We compute the derivative of $E(t,\xi)$:
\begin{align*}
& E^\prime(t,\xi)=\\
& \left[2\mathrm{Re} \big\{v^{\prime\prime}(t)\overline{v^\prime(t)}\big\}
+2c_*(t,\xi) c_*^\prime(t,\xi)|\xi|^2|v(t)|^2
+2c_*(t,\xi)^2 |\xi|^2\mathrm{Re} \big\{v^\prime(t)\overline{v(t)}\big\}
\right]k(t,\xi)\\
{} & -\{c_*(t,\xi)^2|\xi|^2 |v(t)|^2+|v^\prime(t)|^2\} \alpha(t,\xi)k(t,\xi)\\
=& \left[2\{c_*(t,\xi)^2-c(t)^2\}|\xi|^2\mathrm{Re}\big\{v^\prime(t)\overline{v(t)}\big\}
+2c_*(t,\xi)c_*^\prime(t,\xi)|\xi|^2|v(t)|^2\right]k(t,\xi)\\
& -\alpha(t,\xi)E(t,\xi).
\end{align*}
Then we can estimate the right hand side as
\begin{align*}
{} & \left[\frac{2|c_*(t,\xi)^2-c(t)^2||\xi|}{c_*(t,\xi)}
\left|v^\prime(t)\right| \cdot c_*(t,\xi)|\xi| \left|v(t)\right|
+2\frac{|c_*^\prime(t,\xi)|}{c_*(t,\xi)} c_*(t,\xi)^2 |\xi|^2|v(t)|^2\right]k(t,\xi)\\
{} &-\alpha(t,\xi)E(t,\xi)\\
\le& 2Mm^{-1}_0|c_*(t,\xi)-c(t)| |\xi| E(t,\xi)
+\frac{2|c^\prime_*(t,\xi)|}{c_*(t,\xi)}E(t,\xi)-\alpha(t,\xi)E(t,\xi)\\
=&0,
\end{align*}
which implies that $E^\prime(t,\xi)\le0$ for $t\in(0,T)$, and we find that
\[
\mathcal{E}(t)\le \mathcal{E}(0).
\]
Thus the required estimate \eqref{EQ:G-interval} follows from this estimate and 
\eqref{EQ:G-Energy1}. 
The proof of Proposition \ref{prop:G-Colombini} is now finished. 
\end{proof}


\section{Proof of Theorem \ref{thm:Gevrey thm}}
\label{sec:3}
We denote by 
\[
H^\sigma=H^\sigma(\Rn)=(1-\Delta)^{-\sigma/2}L^2(\Rn)
\] 
for $\sigma\in \R$ the standard Sobolev spaces,
and their homogeneous version is 
\[
\dot{H}^\sigma=\dot{H}^\sigma(\Rn)=(-\Delta)^{-\sigma/2}L^2(\Rn).
\]
Kirchhoff equation has a Hamiltonian structure. Namely, we have:
\begin{lem} \label{lem:Energy}
Let $u\in \displaystyle{\bigcap_{j=0}^1}\, C^j([0,T_u);H^{(3/2)-j})$ be the solution of \eqref{EQ:Kirchhoff}.
Then we have 
\[
\mathscr{H}(u;t)=\mathscr{H}(u;0), 
\quad \forall t\in [0,T_u),
\]
where we recall that 
\[
\mathscr{H}(u;t)=\frac12\left\{\|\nabla u(t)\|^2_{L^2}+
\|\pa_t u(t)\|^2_{L^2}\right\}+\frac{1}{4} \|\nabla u(t)\|^4_{L^2}.
\]
\end{lem}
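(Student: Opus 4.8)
The plan is to verify the conservation of $\mathscr{H}(u;t)$ by differentiating it in $t$ and showing the derivative vanishes along solutions of \eqref{EQ:Kirchhoff}. The only subtlety is that $u$ is assumed to lie in $C^1([0,T_u);H^{3/2})\cap C([0,T_u);H^{3/2})$ (with $\pa_t u\in C([0,T_u);H^{1/2})$), which is exactly the regularity needed to make every term below well-defined and the formal computation rigorous.

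First I would observe that $\frac{d}{dt}\|\nabla u(t)\|^2_{L^2} = 2\Re\langle \nabla u, \nabla \pa_t u\rangle_{L^2} = -2\Re\langle \Delta u, \pa_t u\rangle_{L^2}$, which makes sense because $\Delta u(t)\in H^{-1/2}$ pairs with $\pa_t u(t)\in H^{1/2}$; similarly $\frac{d}{dt}\|\pa_t u(t)\|^2_{L^2}=2\Re\langle \pa_t u,\pa_t^2 u\rangle_{L^2}$, and the last term gives $\frac{d}{dt}\frac14\|\nabla u(t)\|^4_{L^2}=\|\nabla u(t)\|^2_{L^2}\cdot\frac{d}{dt}\|\nabla u(t)\|^2_{L^2}=-2\|\nabla u(t)\|^2_{L^2}\Re\langle \Delta u,\pa_t u\rangle_{L^2}$. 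Then I would collect these:
\[
\frac{d}{dt}\mathscr{H}(u;t) = -\Re\langle \Delta u,\pa_t u\rangle_{L^2} + \Re\langle \pa_t u,\pa_t^2 u\rangle_{L^2} - \|\nabla u(t)\|^2_{L^2}\,\Re\langle \Delta u,\pa_t u\rangle_{L^2}.
\]
Next, I would substitute the equation $\pa_t^2 u = \left(1+\|\nabla u(t)\|^2_{L^2}\right)\Delta u$ into the middle term, obtaining $\Re\langle \pa_t u, \left(1+\|\nabla u(t)\|^2_{L^2}\right)\Delta u\rangle_{L^2} = \left(1+\|\nabla u(t)\|^2_{L^2}\right)\Re\langle \pa_t u,\Delta u\rangle_{L^2}$. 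Adding all three terms, the coefficient of $\Re\langle \Delta u,\pa_t u\rangle_{L^2}$ becomes $-1 + \left(1+\|\nabla u(t)\|^2_{L^2}\right) - \|\nabla u(t)\|^2_{L^2} = 0$, so $\frac{d}{dt}\mathscr{H}(u;t)=0$ on $[0,T_u)$, and integrating from $0$ gives the claim.

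The main point to be careful about — rather than a deep obstacle — is justifying that $t\mapsto \|\nabla u(t)\|^2_{L^2}$ and $t\mapsto\|\pa_t u(t)\|^2_{L^2}$ are genuinely differentiable with the derivatives written above. This follows from the assumed regularity: $u\in C^1([0,T_u);H^{3/2})$ implies $\nabla u\in C^1([0,T_u);H^{1/2})$, so $\|\nabla u(t)\|^2_{\dot H^1}$ is $C^1$ with derivative $2\Re\langle\nabla u,\nabla\pa_t u\rangle$ (and the integration by parts moving $\nabla$ across is legitimate since $\nabla u(t)\in H^{1/2}\subset H^{-1/2}$ dual pairing); likewise $\pa_t u\in C^1([0,T_u);H^{1/2})$ handles the second term, and the equation itself guarantees $\pa_t^2 u\in C([0,T_u);H^{-1/2})$ so the pairing $\langle\pa_t u,\pa_t^2 u\rangle$ is the $H^{1/2}$--$H^{-1/2}$ duality. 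With these identifications the computation above is rigorous, completing the proof.
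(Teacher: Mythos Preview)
Your proof is correct and follows exactly the paper's approach (which is the one-line ``multiply by $\pa_t u$ and integrate to get $\frac{d}{dt}\mathscr{H}(u;t)=0$''); you have simply fleshed out the details. One small correction: the hypothesis gives $u\in C([0,T_u);H^{3/2})\cap C^1([0,T_u);H^{1/2})$, not $u\in C^1([0,T_u);H^{3/2})$, so you only have $\nabla u\in C([0,T_u);H^{1/2})\cap C^1([0,T_u);H^{-1/2})$ and $\pa_t u\in C([0,T_u);H^{1/2})\cap C^1([0,T_u);H^{-1/2})$ --- but the $H^{1/2}$--$H^{-1/2}$ duality pairing you already invoke is exactly what makes the differentiation rigorous, so the argument stands unchanged.
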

\begin{proof}
The proof of Lemma \ref{lem:Energy} is elementary. 
Multiplying equation \eqref{EQ:Kirchhoff} by 
$\pa_t u$ and integrating, we get
\[
\frac{d}{dt}\mathscr{H}(u;t)=0,
\]
as desired.
\end{proof}

Now we consider the {\em linear} Cauchy problem  in the strip 
$(0,T_u(v_0,v_1))\times \Rn$: 
\begin{equation}\label{EQ:G-V-Linear}
\pa^2_t v-c(t)^2\Delta v=0, 
\quad t\in (0,T), \quad x\in \Rn,
\end{equation}
with initial condition 
\begin{equation}\label{EQ:G-V-initial}
v(0,x)=v_0(x), \quad 
\pa_t v(0,x)=v_1(x).
\end{equation}
Here $c(t)$ belongs to a class $\mathscr{K}$ defined as follows:\\

\noindent 
{\bf Class $\mathscr{K}(T)$.} {\em 
Let $T>0$. 
Given constants $q>1$, $M>1$ and $K_0>0$, 
we say that $c(t)$ belongs to 
$\mathscr{K}(T)=\mathscr{K}(q,M,K_0,T)$ if $c(t)$ belongs to 
$\mathrm{Lip}_{\mathrm{loc}}([0,T))$ and satisfies
\[
1\le c(t)\le M, \quad t \in [0,T],
\]
\[
\left| c^\prime(t) \right|\le \frac{K_0}{(T-t)^q}, 
\quad a.e.\, t\in [0,T).
\] 
} 

\smallskip

By the energy estimate \eqref{EQ:G-interval} from 
Proposition \ref{prop:G-Colombini}, there exists a real $\eta>0$
such that if $(v_0,v_1)\in 
(-\Delta)^{-3/4}\gamma^s_{\eta,L^2}\times 
(-\Delta)^{-1/4}\gamma^s_{\eta,L^2}$, then 
the Cauchy problem \eqref{EQ:G-V-Linear}--\eqref{EQ:G-V-initial}
admits a unique solution $v$ satisfying 
\begin{equation}\label{EQ:Gevrey data}
v \in \bigcap^1_{j=0}\, 
C^j \left([0,T];(-\Delta)^{-(3/4)+(j/2)}\gamma_{\eta^\prime,L^2}^s 
\cap (-\Delta)^{-(1/2)+(j/2)}
\gamma^s_{\eta^\prime,L^2} \right),
\end{equation}
provided that $1\le s <q/(q-1)$ and $q>1$, 
where $\eta^\prime>0$ is the real number satisfying   
\begin{equation}\label{EQ:eta-prime}
\eta^\prime
=\eta-\left(\frac{2K_0}{q-1}+4M^2\right)>0.
\end{equation}
If we define the function
\[
\tilde{c}(t):=\sqrt{1+\int_\Rn |\nabla v(t,x)|^2\, dx},
\]
this defines the mapping 
$$\Theta:c(t)\mapsto \tilde{c}(t).
$$
We will show
the compactness of $\mathcal{K}(T)$ in $L^\infty_{\mathrm{loc}}([0,T))$ 
and the continuity of $\Theta$. The convexity of $\mathcal{K}(T)$ is clear. 
If we show that $\Theta$ maps 
$\mathcal{K}(T)$ into itself, the Schauder--Tychonoff fixed point theorem allows us to conclude 
the proof. \\

We shall prove here the following:

\begin{prop}\label{prop:Ge-Chiave}
Let $M>2$, $T>0$ and $R>0$. Let 
$1<q<2$ and $s>1$ be such that 
\begin{equation}\label{EQ:s}
s=\frac{1}{q-1}. 
\end{equation}
Let $\eta>0$ be 
such that
\begin{equation}\label{EQ:q}
\eta>\frac{2M^2 e^{4M^2}RT^q}{q-1}+4M^2.
\end{equation}
If $(v_0,v_1)\in 
(-\Delta)^{-3/4}\gamma^s_{\eta,L^2}\times 
(-\Delta)^{-1/4}\gamma^s_{\eta,L^2}$ 
satisfy 
\begin{equation}\label{EQ:2-Ge-data}
2\mathscr{H}(v;0)\le \frac{M^2}{4}-1,
\end{equation}
\begin{equation}\label{EQ:R-data}
\left\|((-\Delta)^{3/4}v_0,(-\Delta)^{1/4}v_1)
\right\|^2_{\gamma^s_{\eta,L^2}\times \gamma^s_{\eta,L^2}}
\le R,
\end{equation}
then, setting
\begin{equation}\label{EQ:K0}
K_0=M^2 e^{4M^2}RT^q,
\end{equation}
we have the following statement{\rm :}
For any $c(t)\in \mathscr{K}(T)$, let $v$ 
be a solution to the Cauchy problem 
\eqref{EQ:G-V-Linear}--\eqref{EQ:G-V-initial} satisfying \eqref{EQ:Gevrey data}.
Then 
\begin{equation} \label{EQ:Ge-Core1}
1\le \tilde{c}(t) \le M, \quad t\in[0,T],
\end{equation}
\begin{equation} \label{EQ:Ge-Core2}
\left|\tilde{c}^\prime(t)\right| \le \frac{K_0}{(T-t)^q}, \quad t\in[0,T).
\end{equation}
\end{prop}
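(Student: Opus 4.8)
The plan is to verify the two conclusions \eqref{EQ:Ge-Core1} and \eqref{EQ:Ge-Core2} separately, in both cases passing from information on $v$ to information on $\tilde c(t)=\sqrt{1+\|\nabla v(t)\|_{L^2}^2}$. First I would establish the upper and lower bounds \eqref{EQ:Ge-Core1}. The lower bound $\tilde c(t)\ge 1$ is immediate from the definition. For the upper bound, the key point is that $c(t)\in\mathscr{K}(T)$ gives $1\le c(t)\le M$, so the linear equation \eqref{EQ:G-V-Linear} has coefficient bounded as in \eqref{EQ:G-hyp1} with $m_0=1$; moreover the energy $\mathscr{H}$ for the \emph{nonlinear} problem is replaced here by the conserved quantity of the \emph{linear} equation with this fixed coefficient. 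I would multiply \eqref{EQ:G-V-Linear} by $\pa_t v$ and integrate to see that $\|\pa_t v(t)\|_{L^2}^2+c(t)^2\|\nabla v(t)\|_{L^2}^2$ is controlled by its initial value up to the variation of $c(t)^2$; combined with \eqref{EQ:2-Ge-data} and the bound $c\le M$ this yields $\|\nabla v(t)\|_{L^2}^2\le M^2/4-1$, hence $\tilde c(t)^2\le M^2/4<M^2$. (One has to be slightly careful: since $c$ is only locally Lipschitz and may blow up near $T$, the cleanest route is to use the energy estimate already embedded in Proposition \ref{prop:G-Colombini}, specialised to $\sigma=1$, which controls $\|\nabla v(t)\|_{\gamma^s_{\eta',L^2}}$ and a fortiori $\|\nabla v(t)\|_{L^2}$ in terms of the data; then \eqref{EQ:R-data}, \eqref{EQ:q} and the choice \eqref{EQ:s} guarantee the hypothesis \eqref{EQ:eta} holds so that the estimate applies.)

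Next I would treat the derivative bound \eqref{EQ:Ge-Core2}. Differentiating $\tilde c(t)^2=1+\|\nabla v(t)\|_{L^2}^2$ gives $2\tilde c(t)\tilde c'(t)=2\,\mathrm{Re}\int_\Rn \nabla v(t)\cdot\overline{\nabla\pa_t v(t)}\,dx$, so using $\tilde c(t)\ge 1$,
\[
|\tilde c'(t)|\le \left|\int_\Rn \nabla v(t)\cdot\overline{\nabla\pa_t v(t)}\,dx\right|
\le \|(-\Delta)^{3/4}v(t)\|_{L^2}\,\|(-\Delta)^{1/4}\pa_t v(t)\|_{L^2},
\]
by splitting the two derivatives as $|\xi|^2=|\xi|^{3/2}\cdot|\xi|^{1/2}$ on the Fourier side and applying Cauchy--Schwarz. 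Now the right-hand factors are exactly the quantities controlled by Proposition \ref{prop:G-Colombini} with $\sigma=3/2$: apply it with $m_0=1$, the present $M$, and $K=K_0$ to the interval $[0,t]$, or rather use \eqref{EQ:G-interval} directly. This bounds the product by
\[
\max\{M^2,1\}\,e^{4M^2\max\{1,T^{1-(qs-s)}\}}\,
\bigl\|((-\Delta)^{3/4}v_0,(-\Delta)^{1/4}v_1)\bigr\|^2_{\gamma^s_{\eta,L^2}\times\gamma^s_{\eta,L^2}}.
\]
Using \eqref{EQ:R-data} this is at most $M^2 e^{4M^2}R$ (here the relation \eqref{EQ:s}, i.e. $s=1/(q-1)$, is used precisely to make $qs-s=1$, so the exponent becomes $4M^2$ and the $\max$ with $T$-powers disappears). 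Comparing with the desired right-hand side $K_0/(T-t)^q = M^2 e^{4M^2}R\,T^q/(T-t)^q$ and noting $T^q/(T-t)^q\ge 1$ on $[0,T)$, the inequality \eqref{EQ:Ge-Core2} follows.

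The main obstacle I anticipate is the bookkeeping needed to invoke Proposition \ref{prop:G-Colombini} legitimately on a time interval $[0,t]$ with $t<T$: the proposition is stated for coefficients in $\mathrm{Lip}_{\mathrm{loc}}([0,T))$ with the $(T-t)^{-q}$-type bound up to the endpoint, so one should either apply it on each $[0,t]$ with $T$ replaced by $t$ (checking the rescaled hypotheses, in particular that \eqref{EQ:eta} still holds with the new endpoint, which it does since the bound \eqref{EQ:eta} is monotone in the relevant direction) or argue that the energy estimate \eqref{EQ:G-interval} is already uniform on $[0,T]$. A secondary subtlety is matching the exponent condition: one must confirm that \eqref{EQ:q} together with \eqref{EQ:s} implies \eqref{EQ:eta} with $m_0=1$, $K=K_0$, which is a direct substitution using $q-1=1/s$ and $K_0=M^2 e^{4M^2}RT^q$. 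Once these compatibility checks are in place, the estimates are routine applications of Cauchy--Schwarz and the already-proved energy inequality, and the proof closes via the stated choice of $K_0$.
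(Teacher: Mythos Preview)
Your treatment of \eqref{EQ:Ge-Core2} is correct and matches the paper's argument essentially line for line: differentiate $\tilde c^2$, split $|\xi|^2=|\xi|^{3/2}|\xi|^{1/2}$, apply Proposition~\ref{prop:G-Colombini} with $\sigma=3/2$, $m_0=1$, $K=K_0$, use \eqref{EQ:s} to collapse the $T$-exponential, and compare with $K_0/T^q\le K_0/(T-t)^q$.

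There is, however, a genuine gap in your proof of \eqref{EQ:Ge-Core1}. Neither of the two routes you sketch works. The direct linear-energy route fails because $\frac{d}{dt}\bigl(\|\partial_t v\|_{L^2}^2+c(t)^2\|\nabla v\|_{L^2}^2\bigr)=2c(t)c'(t)\|\nabla v\|_{L^2}^2$, and since $|c'(t)|$ is only controlled by $K_0(T-t)^{-q}$ with $q>1$, the accumulated variation is not integrable up to $T$; you cannot conclude $\|\nabla v(t)\|_{L^2}^2\le M^2/4-1$ from \eqref{EQ:2-Ge-data} this way. The alternative via Proposition~\ref{prop:G-Colombini} with $\sigma=1$ also fails: that estimate produces a factor $M^2 e^{4M^2}$ in front of a Gevrey norm of the data (a norm, incidentally, not among the hypotheses), so the resulting bound on $\|\nabla v(t)\|_{L^2}^2$ is of size $M^2 e^{4M^2}\cdot(\text{data})$, not $M^2/4-1$. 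The condition \eqref{EQ:2-Ge-data} is an $L^2$-level smallness of the \emph{nonlinear} Hamiltonian and carries no weight in a purely linear estimate with a generic $c\in\mathscr{K}(T)$.

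The paper's proof of \eqref{EQ:Ge-Core1} is genuinely different and uses the nonlinear structure. One observes $\tilde c(0)\le M/2$ from \eqref{EQ:2-Ge-data}, so by continuity $\tilde c\le M$ on some $[0,t_1]$. On that subinterval the already-established derivative bound \eqref{EQ:Ge-Core2} shows $\Theta$ maps $\mathscr{K}(t_1)$ into itself, and a Schauder fixed-point argument produces a fixed point $c=\tilde c$ there; but then $v$ solves the \emph{nonlinear} Kirchhoff equation on $[0,t_1]$, so Lemma~\ref{lem:Energy} gives $\mathscr{H}(v;t)=\mathscr{H}(v;0)$ and hence $\tilde c(t)\le M/2$ on $[0,t_1]$. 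This improved bound lets one extend the interval, and a standard continuation argument forces the maximal time to be $T$. The key idea you are missing is this bootstrap: linear estimates alone cannot close \eqref{EQ:Ge-Core1}; one must pass through the fixed point to invoke the conserved Hamiltonian.
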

\begin{proof}
First, we prove \eqref{EQ:Ge-Core2}. To this end, we have only to show that
\begin{equation}\label{EQ:Ge-another}
|\tilde{c}^\prime(t)|\le \frac{K_0}{T^q}
\end{equation}
for $t\in[0,T]$, since the right hand side of \eqref{EQ:Ge-another}
is bounded by $K_0/(T-t)^q$ for $t\in[0,T)$.
One can readily see that 
\[
2\tilde{c}(t)\tilde{c}^\prime(t)=2\Re \left((-\Delta)^{3/4}v(t),(-\Delta)^{1/4}\pa_t 
v(t)\right)_{L^2},
\]
and hence, 
we have
\begin{align}\label{EQ:Gev-est}
|\tilde{c}^\prime(t)|
\le& \left\|v(t)\right\|_{\dot{H}^{3/2}} 
\left\|\pa_t v(t)\right\|_{\dot{H}^{1/2}}\\
\le& \left\|(-\Delta)^{3/4}v(t)
\right\|_{\gamma^s_{\eta^\prime,L^2}}
\left\|(-\Delta)^{1/4}\pa_t v(t)
\right\|_{\gamma^s_{\eta^\prime,L^2}}
\nonumber
\end{align}
for any $\eta^\prime>0$, 
since $\tilde{c}(t)\ge 1$. 
Then, by the definition 
\eqref{EQ:K0} of $K_0$, the constant $\eta$
satisfies the following inequality:
\[
\eta>\frac{2K_0}{q-1}+4M^2.
\]
Hence, if 
$\eta^\prime$ is chosen as in 
\eqref{EQ:eta-prime},
then, applying the energy estimate 
\eqref{EQ:G-interval} from 
Proposition \ref{prop:G-Colombini} to the 
right hand side of 
\eqref{EQ:Gev-est}, we can write
\begin{equation}\label{EQ:preparation}
|\tilde{c}^\prime(t)|
\le M^2
e^{4M^2\max\{1,T^{1-(qs-s)}\}}
\left\|\left((-\Delta)^{3/4}v_0,(-\Delta)^{1/4}v_1\right)
\right\|^2_{\gamma^s_{\eta,L^2}\times \gamma^s_{\eta,L^2}}
\end{equation}
for $t\in[0,T]$. Since $1-(qs-s)=0$ by assumption \eqref{EQ:s},
it follows that 
\begin{equation}\label{EQ:EXP}
e^{4M^2\max\{1,T^{1-(qs-s)}\}}
=e^{4M^2}.
\end{equation}
Hence, recalling the definition 
\eqref{EQ:K0} of $K_0$ and using 
\eqref{EQ:R-data}, we conclude from 
\eqref{EQ:preparation}--\eqref{EQ:EXP} that
\begin{multline}\label{EQ:est-c}
|\tilde{c}^\prime(t)|
\le M^2
e^{4M^2}
\left\|((-\Delta)^{3/4}v_0,(-\Delta)^{1/4}v_1)
\right\|^2_{\gamma^s_{\eta,L^2}\times \gamma^s_{\eta,L^2}}\\
\le M^2 e^{4M^2}\cdot RT^q \cdot \frac{1}{T^q}
= \frac{K_0}{T^q}
\end{multline}
for $t\in[0,T]$.
Thus we get the required estimate \eqref{EQ:Ge-another}.

Finally we prove \eqref{EQ:Ge-Core1}. In 
this case, we will not use the energy 
estimate \eqref{EQ:G-interval} from 
Proposition \ref{prop:G-Colombini}.
Our assumption \eqref{EQ:2-Ge-data} implies that 
\[
1\le \tilde{c}(0)\le \sqrt{1+2\mathscr{H}(v;0)}\le 
\frac{M}{2}.
\] 
Since $\tilde{c}(t)$ is continuous, 
there exists a time $t_1<T$
such that
\[
1\le \tilde{c}(t)\le M
\]
for $0\le t\le t_1$.
Fixing data $(v_0,v_1)$ satisfying 
\eqref{EQ:2-Ge-data}--\eqref{EQ:R-data},
we can show that the class $\mathcal{K}(t_1,K_0)$ 
is the convex and compact subset of
the Banach space $L^\infty([0,t_1])$, 
and resorting to \eqref{EQ:Ge-Core2}, 
we can also prove that $\Theta$ is continuous 
from $\mathcal{K}(t_1, K_0)$ into itself. 
This argument will be also done in the 
whole interval $[0,T]$ in the last step, where we give its details. 
Then Schauder's fixed point theorem allows us 
to conclude that $\Theta$ has a fixed point 
in $\mathcal{K}(t_1,K_0)$:
\[
c(t)=\Theta(c(t))=\tilde{c}(t)
\] 
for $0\le t\le t_1$. This means that 
solution $v(t,x)$ to 
the linear Cauchy problem 
\eqref{EQ:G-V-Linear}--\eqref{EQ:G-V-initial} 
is also a solution to the nonlinear 
Cauchy problem \eqref{EQ:Kirchhoff} with 
data $(v_0,v_1)$ on $[0,t_1]$. Hence it 
follows from Lemma \ref{lem:Energy} 
and assumption \eqref{EQ:2-Ge-data} that 
\[
2\mathscr{H}(v;t)=2\mathscr{H}(v;0)\le\frac{M^2}{4}-1, 
\quad t\in[0,t_1],
\]
and as a result, we deduce that
\[
1\le \tilde{c}(t)\le \sqrt{1+2\mathscr{H}(v;t)}\le 
\frac{M}{2}
\]
for $0\le t\le t_1$.
Therefore, by the continuity of 
$\tilde{c}(t)$, there exists a 
time $t_2\in(t_1,T)$
such that
\[
1\le \tilde{c}(t)\le M
\]
for $0\le t\le t_2$. Hence, we can develop 
the previous fixed point 
argument; the solution $v(t,x)$ to 
the linear Cauchy problem 
\eqref{EQ:G-V-Linear}--\eqref{EQ:G-V-initial} 
is also a solution to the nonlinear 
Cauchy problem \eqref{EQ:Kirchhoff} with data 
$(v_0,v_1)$ on $[0,t_2]$ satisfying 
\[
2\mathscr{H}(v;t)=2\mathscr{H}(v;0)\le \frac{M^2}{4}-1, 
\quad t\in[0,t_2],
\]
where we have used assumption 
\eqref{EQ:2-Ge-data} in the last step.
Now, we define a time $t_*$ by the maximal 
time such that 
\[
1\le \tilde{c}(t)\le M
\]
for $0\le t\le t_*$. Suppose that 
$t_*<T$. Then, after employing 
the fixed point argument on the interval 
$[0,t_*]$, 
we deduce from Lemma \ref{lem:Energy} and 
assumption \eqref{EQ:2-Ge-data} 
that 
\[
2\mathscr{H}(v;t_*)=2\mathscr{H}(v;0)\le \frac{M^2}{4}-1, 
\]
and hence, we get 
\[
1\le \tilde{c}(t_*)\leq \frac{M}{2}.
\]
Therefore, the fixed point argument will be 
also applicable, and $v(t,x)$ coincides with 
the solution to \eqref{EQ:Kirchhoff} with 
data $(v_0,v_1)$ on some interval 
$[0,t_{**}]$ strictly containing $[0,t_*]$. 
This implies that $\tilde{c}(t)$ is bounded by 
$M$ on $[0,t_{**}]$.
But this contradicts the maximality of 
$t_*$. Thus we must have
the required estimate \eqref{EQ:Ge-Core1}.
The proof of Proposition \ref{prop:Ge-Chiave} is now complete.
\end{proof}

Based on the previous proposition, we prove our theorem. 

\begin{proof}[Completion of proof of Theorem \ref{thm:Gevrey thm}]
Hereafter, we write 
\[
\mathcal{K}=\mathcal{K}(T).
\]
Let $c(t)\in \mathscr{K}$, 
we fix the data 
\[
(v_0,v_1)\in (-\Delta)^{-3/4}\gamma^s_{\eta,L^2}\times 
(-\Delta)^{-1/4}\gamma^s_{\eta,L^2}
\]
satisfying 
\eqref{EQ:2-Ge-data}--\eqref{EQ:R-data}. 
Then it follows from Proposition \ref{prop:Ge-Chiave} that the mapping
\[
\Theta: c(t)\mapsto \tilde{c}(t)
\]
maps from $\mathscr{K}$ into itself. 
Now $\mathscr{K}$ may be regarded as 
the convex subset of the 
Fr\'echet space $L^\infty_{\mathrm{loc}}([0,T))$, 
and we endow 
$\mathscr{K}$ with the induced topology. We shall prove 
the compactness of $\mathscr{K}$ and continuity 
of the mapping $\Theta$. Then the Schauder-Tychonoff 
theorem allows us to conclude 
the proof. 

\smallskip

{\em Compactness of $\mathscr{K}$.} 
We show that $\mathscr{K}$ is uniformly bounded and 
equi-continuous on every compact interval of 
$[0,T)$. Let $\{ c_k(t) \}_{k=1}^\infty$ be a 
sequence in $\mathscr{K}$ such that
\begin{equation}\label{EQ:c-bdd}
1\le c_k(t) \le M, \quad t\in [0,T],
\end{equation}
\begin{equation}\label{EQ:small-S-Calculus}
|c^\prime_k(t)|\le \frac{K_0}{(T-t)^q},  \quad a.e.\, t\in [0,T).
\end{equation}
Observing
\[
c_k(t)-c_k(t^\prime)
=\int^{t}_{t^\prime} c_k^\prime(\tau) \, d \tau, 
\]
we obtain from \eqref{EQ:small-S-Calculus} that
\[
|c_k(t)-c_k(t^\prime)|\le \frac{K_0}{q-1}\left\{\frac{1}{(T-t^\prime)^{q-1}}
-\frac{1}{(T-t)^{q-1}}\right\}
\]
for $0\le t^\prime<t<T$. 
Since $1/(T-t)^{q-1}$ is uniformly continuous 
on every compact interval of $[0,T)$, the sequence 
$\{ c_k(t) \}_{k=1}^\infty$ is equi-continuous on that interval. 
Thus $\mathscr{K}$ is relatively compact in 
$L^\infty_{\mathrm{loc}}([0,T))$, and hence, one can deduce from 
the Arzel\`a-Ascoli theorem that every sequence $\{ c_k(t) \}_{k=1}^\infty$ 
in $\mathscr{K}$ has a subsequence, denoted by the same, 
converging to some $c(\cdot)\in 
L^\infty_{\mathrm{loc}}([0,T))$:
\begin{equation}\label{EQ:Tu}
\left\{
\begin{aligned}
& c_k(t) \underset{(k\to\infty)}{\to} c(t) 
\quad 
\text{in $L^\infty_{\mathrm{loc}}([0,T))$;} \\
& \text{$1\le c(t)\le M$ \quad for every compact interval in 
$[0,T)$;} \\
& \text{$|c(t)-c(t^\prime)|\le \frac{K_0}{q-1}\left\{\frac{1}{(T-t^\prime)^{q-1}}
-\frac{1}{(T-t)^{q-1}}\right\}$, 
\quad $0\le t^\prime<t<T.$}
\end{aligned}\right.
\end{equation}
The last statement of \eqref{EQ:Tu} 
implies that $c(t)$ is in $\mathrm{Lip}_{\mathrm{loc}}([0,T))$,
since the function 
$(T-t)^{-(q-1)}$ is in $\mathrm{Lip}_{\mathrm{loc}}([0,T))$.
Furthermore, $c(t)$ must be 
bounded by $M$ even at $t=T$:
\begin{equation}\label{EQ:bdd-M}
1\le c(t)\le M, \quad t\in[0,T].
\end{equation}
Indeed, if 
\[
\varlimsup_{t\nearrow T}c(t)>M,
\]
there exists a sequence $\{t_j\}$ such that
\[
t_j \nearrow T \quad \text{and} \quad c(t_j)>M, \quad (j=1,2,\ldots).
\]
Going back to \eqref{EQ:c-bdd}, and resorting to the first statement 
of \eqref{EQ:Tu}, we have
\[
c(t_j)=\lim_{k\to\infty}c_k(t_j)\le M, \quad (\forall\, j)
\]
which leads to a contradiction.
Thus we conclude that $c(t)$ satisfies \eqref{EQ:bdd-M} and
\[
c(\cdot)\in \mathrm{Lip}_{\mathrm{loc}}([0,T)),
\]
and the derivative $c^\prime(t)$
exists a.e. $t\in [0,T)$. 
Now, for the derivative $c^\prime(t)$, 
if we prove that
\begin{equation}\label{EQ:small-S-limsup}
|c^\prime(t)|\le \frac{K_0}{(T-t)^q}, \quad a.e.\, t\in [0,T),
\end{equation} 
then $c(t) \in \mathscr{K}$, which proves the compactness of $\mathscr{K}$. 
We prove \eqref{EQ:small-S-limsup}.
Let $t_0\in (0,T)$ be an arbitrary point where $c(t)$ is differentiable. 
Since we have, by using \eqref{EQ:small-S-Calculus}, 
\begin{align*}
& \left| \frac{1}{2h}\left\{c_k(t_0+h)-c_k(t_0-h) \right\}\right|
= \left|\frac{1}{2h} \int_{t_0-h}^{t_0+h}c^\prime_k(t)\, dt \right| \\
\le & \frac{K_0}{2h(q-1)} \left\{\frac{1}{(T-(t_0-h))^{q-1}}
-\frac{1}{(T-(t_0+h))^{q-1}}\right\}
\end{align*}
for $h>0$, we can take the limit in this equation 
with respect to $k$, so that 
\[
\left|\frac{1}{2h}\left\{c(t_0+h)-c(t_0-h)\right\}\right|
\le \frac{K_0}{2h(q-1)} \left\{\frac{1}{(T-(t_0-h))^{q-1}}
-\frac{1}{(T-(t_0+h))^{q-1}}\right\}.
\]
Then, letting $h\to+0$, we conclude that 
\[
|c^\prime(t_0)|
\le \frac{K_0}{(T-t_0)^q}. 
\] 
Since $t_0$ is arbitrary, we get \eqref{EQ:small-S-limsup}. 

\smallskip

{\em Continuity of $\Theta$ on $\mathscr{K}$.} 
Let us take a sequence $\{ c_k(t) \}$ in $\mathscr{K}$ such that 
\[
\text{$c_k(\cdot) \to c(\cdot) \in \mathscr{K}$ \quad in 
$L^{\infty}_{\mathrm{loc}}([0,T))$ \quad $(k \to \infty)$,} 
\]
and let $v_k(t,x)$ and $v(t,x)$ be corresponding solutions 
to the linear Cauchy problem \eqref{EQ:G-V-Linear}--\eqref{EQ:G-V-initial} 
with coefficients $c_k(t)$ and $c(t)$, respectively, with fixed data 
$(v_0,v_1)$. 
Then it is sufficient to  prove that the images 
$\tilde{c}_k(t) :=\Theta (c_k(t))$ and $\tilde{c}(t):=\Theta(c(t))$ satisfy 
\begin{equation} \label{EQ:small-S-convergence}
\text{$\tilde{c}_k(\cdot) \to \tilde{c}(\cdot)$ \quad in 
$L^{\infty}_{\mathrm{loc}}([0,T))$ \quad $(k \to \infty)$.} 
\end{equation} 
The functions 
$w_k:=v_k-v$, $k=1,2,\ldots$, solve the following Cauchy problem: 
\[ \begin{cases} 
\partial^2_t w_k-c(t)^2\Delta w_k 
=\left\{ c_k(t)^2-c(t)^2 \right\} \Delta v_k, 
\quad (t,x) \in (0,T) \times \Rn,\\
w_k(0,x)=0, \quad \partial_t w_k(0,x)=0, \quad x \in \Rn.
\end{cases} 
\]
If we differentiate the energy $\mathscr{E}(w_k(t))$ for $w_k$ with respect to $t$, where 
\[
\mathscr{E}(w_k(t))=\Vert \pa_t w_k(t) \Vert^{2}_{L^2}
+c(t)^2 \| \nabla w_k(t) \|^{2}_{L^2},
\]
we get 
\begin{align} \label{EQ:small-S-diff} 
\mathscr{E}^{\prime}(w_k(t))=& -2\left\{c_k(t)^2-c(t)^2 \right\} 
\Re \left(\Delta v_k(t),\pa_t w_k(t) \right)_{L^2}\\ 
& +2c(t) c^{\prime}(t) 
\left\Vert \nabla w_k(t) \right\Vert^{2}_{L^2}
\nonumber\\ 
\le& 2\left\vert c_k(t)^2-c(t)^2 \right\vert 
\Vert v_k(t) \|_{\dot{H}^{3/2}} \Vert \pa_t w_k(t) 
\Vert_{\dot{H}^{1/2}}
+2 \frac{|c^{\prime}(t)|}{c(t)} \mathscr{E}(w_k(t)). 
\nonumber
\end{align}
Here, we see from \eqref{EQ:G-interval} in 
Proposition \ref{prop:G-Colombini} that
\begin{align*}
& \| v_k(t) \|_{\dot{H}^{3/2}}\| \pa_t w_k(t) \|_{\dot{H}^{1/2}}\\
\le& M^2 e^{4M^2T^{1-(qs-s)}}
\|((-\Delta)^{3/2}v_0,(-\Delta)^{1/2}v_1)
\|^2_{\gamma^s_{\eta,L^2}\times \gamma^s_{\eta,L^2}}
\end{align*}
for $0\le t\le T$.
Then we integrate \eqref{EQ:small-S-diff} and 
apply Gronwall's lemma to obtain
\begin{multline*}
\mathscr{E}(w_k(t))
\le M^2e^{4M^2m^{-1}_0T^{1-(qs-s)}}
\|((-\Delta)^{3/2}v_0,(-\Delta)^{1/2}v_1)
\|^2_{\gamma^s_{\eta,L^2}\times \gamma^s_{\eta,L^2}}\times \\
\left( \int^t_0
\left\vert c_k(\tau)^2-c(\tau)^2 \right\vert \, d \tau \right)
\exp\left(2\int^t_0 \frac{\vert c^{\prime}(\tau) \vert }{c(\tau)} \, d \tau
\right) 
\end{multline*}
for $t \in [0,T)$, 
which implies that 
\[ \left. \begin{gathered} 
\nabla v_k(t) \to \nabla v(t) \\ 
\pa_t v_k(t) \to \pa_t v(t) 
\end{gathered} \right\} \quad 
\text{in $L^{\infty}_{\mathrm{loc}}([0,T);L^2)$ as $k \to \infty$.}
\]
Hence we get \eqref{EQ:small-S-convergence}, 
which proves the continuity of $\Theta$. \\

We are now in a position to conclude the proof. 
Proposition \ref{prop:Ge-Chiave} and the previous results assure that  
$\Theta$ is continuous from 
$\mathscr{K}$ into itself, provided that the data $(v_0,v_1)$ satisfy
\eqref{EQ:2-Ge-data}--\eqref{EQ:R-data}.
Since $\mathscr{K}$ is the convex and compact subset of 
the Fr\'echet space $L^\infty_{\mathrm{loc}}([0,T))$, 
the Schauder-Tychonoff theorem implies that 
$\Theta$ has a fixed point in $\mathscr{K}$, 
and hence, we conclude that solution $v(t,x)$ to 
the linear Cauchy problem 
\eqref{EQ:G-V-Linear}--\eqref{EQ:G-V-initial} is also a solution 
to the nonlinear Cauchy problem \eqref{EQ:Kirchhoff} with data $(v_0,v_1)$ 
on $[0,T]$.

In conclusion, we obtain that
there exist $M>2$ and $R>0$ such that for every $\eta_0(M,R,T)>0$ 
there exists $\eta>\eta_0(M,R,T)$ so that
if $s>1$ and
$(u_0,u_1)\in \gamma_{\eta,L^2}^s\times \gamma_{\eta,L^2}^s$ satisfy
\eqref{EQ:2-Ge-data}--\eqref{EQ:R-data}, 
the Cauchy problem \eqref{EQ:Kirchhoff} admits a solution
$u$ in the class 
$C^1([0,\infty);\gamma_{L^2}^s)$. 
The uniqueness is proved by the same argument 
as in the fixed point one.  
The proof of Theorem \ref{thm:Gevrey thm} is finished. 
\end{proof}


\section{Initial-boundary value problems for the Kirchhoff equation}
\label{sec:4}
The argument in the proof of Theorem \ref{thm:Gevrey thm} is 
available for 
the initial-boundary value problems in  an open set
$\Omega$ of $\Rn$. 
In this section we discuss the global well-posedness for 
initial-boundary value problem to the Kirchhoff equation 
in the typical domains: bounded domains 
and exterior domains. The results in  this section can be proved by 
Fourier series expansions method in bounded domains, and generalised Fourier 
transform method in exterior domains, respectively. 
It is known from spectral theorem that 
a self-adjoint operator on a separable Hilbert space is unitary 
equivalent to a multiplication operator on some $L^2(\mathcal{M},\mu)$,
where $(\mathcal{M},\mu)$ is a measure space. 
Then $L^2(\Omega)$ is unitary equivalent to 
$L^2(\Rn)$. This means that the Fourier transform method in $\Rn$ 
is available for $L^2$ space on an open set $\Omega$ in $\Rn$;
any multiplier acting on $L^2(\Rn)$ is unitarily transformed into an multiplier 
acting on $L^2(\Omega)$.  

\subsection{The case: $\Omega$ is an exterior domain.}
Replacing the Fourier transform over $\Rn$ by the generalised Fourier transform 
over exterior domains and applying exactly the same argument of 
Theorem \ref{thm:Gevrey thm}, 
we can also prove 
a similar result on the initial-boundary value problem in exterior domains. More precisely, we consider the following problem:
\begin{equation}\label{EQ:B-Kirchhoff}
\left\{
\begin{aligned}
& \pa^2_t u-\left(1+\int_\Omega |\nabla u(t,y)|^2 \, dy\right)\Delta u=0, 
& \quad t>0, \quad x\in \Omega,\\
& u(0,x)=u_0(x), \quad \pa_t u(0,x)=u_1(x), &\quad x \in \Omega,\\
& u(t,x)=0, & \quad x\in \pa\Omega.
\end{aligned}\right.
\end{equation} 
Here, $\Omega$ is a domain in $\Rn$ such that 
$\Rn\setminus \Omega$
is compact and its boundary $\pa\Omega$ 
is analytic.  The latter assumption may be in principle relaxed but
this would require an extension of known analytic solvability results to the
Gevrey setting, so we omit it for this moment, and refer to
\cite{Arosio-analytic} and \cite{KoNa} for further details.

Following Wilcox \cite{Wilcox}, let us define 
the generalised Fourier transforms in an arbitrary exterior domain $\Omega$. 
Let $A$ be a self-adjoint realisation of the Dirichlet Laplacian $-\Delta$ 
with domain $H^2(\Omega)\cap H^1_0(\Omega)$. Then $A$ is 
non-negative on $L^2(\Omega)$, and we can define the square root
$A^{1/2}$ of  $A$.
We recall the resolvent operator $R(|\xi|^2\pm i0)$:
$$R(|\xi|^2\pm i0)=
\lim_{\varepsilon\to+0}(A-(|\xi|^2\pm i\varepsilon))^{-1},
$$
and $R(|\xi|^2\pm i0)$ is bounded from 
$L^2(\Omega,\langle x \rangle^s dx)$ to 
$H^2(\Omega,\langle x \rangle^{-s}dx)$ for each $\xi\in \Rn$ and 
some $s>1/2$,
where $\langle x \rangle=(1+|x|)^{1/2}$
(see, e.g., Mochizuki \cite{Mochizuki}).
Introducing a function $j=j(x)\in C^\infty(\Rn)$ vanishing 
in a neighbourhood of $\Rn\setminus \Omega$ and equal to 
one for large $|x|$, let us define the generalised Fourier transforms 
as follows: 
\[ 
(\mathscr{F}_\pm f)(\xi)=\lim_{L\to\infty}
(2\pi)^{-n/2}
\int_{\Omega\cap \{|x|<L\}}
\overline{\psi_\pm(x,\xi)}f(x)\, dx \quad \text{in \quad 
$L^2(\Rn)$,}
\]
where we put 
\[
\psi_\pm(x,\xi)=j(x)e^{ix\cdot\xi}+[R(|\xi|^2\pm i0)M_\xi(\cdot)](x) 
\]
\[
\text{with} \quad M_\xi(x)=-(A-|\xi|^2)(j(x)e^{ix\cdot \xi}).
\]
Notice that we can write formally 
\[
M_\xi(x)=\{\Delta j(x)+2i\xi\cdot \nabla j(x)\}e^{ix\cdot \xi}.
\]
The kernels $\psi_\pm(x,\xi)$ are called eigenfunctions of the operator $A$ 
with eigenvalue $|\xi|^2$ 
in the sense that, formally, 
$$(A-|\xi|^2)\psi_\pm(x,\xi)=0,
$$
but $\psi_\pm(x,\xi)\notin L^2(\Omega)$. Similarly, 
the inverse transforms are defined by 
\[
(\mathscr{F}_\pm^*g)(x)=\lim_{L\to\infty}(2\pi)^{-n/2}\int_{\{|\xi|<L\}}
\psi_\pm(x,\xi)g(\xi)\, d\xi \quad \text{in $L^2(\Omega)$.}
\]
We treat $\mathscr{F}_+f$ only and drop the subscript $+$, since 
$\mathscr{F}_-f$ can be dealt with by essentially the same method. 
The transform $\mathscr{F} f$ thus defined obeys the following properties 
(see, e.g., Shenk II \cite[Theorem 1 and Corollary~5.1]{Shenk}): 
\begin{enumerate}
\item[(i)] $\mathscr{F}$ is a unitary mapping 
$$\mathscr{F}:L^2(\Omega)\rightarrow L^2(\Rn).$$ 
Hence 
$$
\mathscr{F}\mathscr{F}^*=I. 
$$
\item[(ii)] $\mathscr{F}$ satisfies the generalised Parseval identity:
\[
(\mathscr{F}f,\mathscr{F}g)_{L^2(\Rn)}=(f,g)_{L^2(\Omega)}, \quad 
f,g \in L^2(\Omega).
\]

\item[(iii)] $\mathscr{F}$ diagonalises the operator $A$ in the sense that 
\[
\mathscr{F}(\varphi(A)f)(\xi)=\varphi(|\xi|^2)(\mathscr{F}f)(\xi),
\]
where $\varphi(A)$ is the operator 
defined by the spectral representation theorem 
for self-adjoint operators. 
\end{enumerate}

We define the Sobolev spaces over $\Omega$ as follows: 
\[
\text{$H^\sigma(\Omega)=(1+A)^{-\sigma/2}L^2(\Omega)$}
\quad \text{for $\sigma\in \R$,}
\] 
and their homogeneous version is defined as 
\[
\text{$\dot{H}^\sigma(\Omega)=A^{-\sigma/2}L^2(\Omega)$}
\quad \text{for $\sigma\in \R$.}
\]
We say that $f\in \gamma_{L^2}^s(\Omega)$ for $s\ge1$ if 
and only if there exists a constant $\eta>0$ such that
\[
\int_\Rn e^{\eta|\xi|^{1/s}}|(\mathscr{F}f)(\xi)|^2\, d\xi<\infty.
\]
The class $\gamma^s_{L^2}(\Omega)$ is endowed with the 
projective limit topology.\\

To state the results, we need to introduce 
the analytic compatibility condition. \\

\noindent 
{\bf The Gevrey 
compatibility condition.} $f$ 
satisfies the Gevrey 
compatibility condition if and only if 
$f\in \gamma^s_{L^2}(\Omega)$ satisfies 
\[
A^kf\in H^1_0(\Omega), \quad k=0,1,\cdots.
\]

Based on the properties (i)--(iii) of the generalised Fourier transform, 
we have:
\begin{thm} \label{thm:Exterior}
Assume that $\Omega$ is an exterior domain of $\Rn$ such that $\Rn\setminus 
\Omega$ is compact with analytic boundary 
$\pa \Omega$.
Let $T>0$ and $s>1$. 
Suppose that there exist $M>2$, $R>0$ 
and $\eta>\eta_0(M,R,T)$ for a sufficiently large 
$\eta_0(M,R,T)$ such that 
the functions
$u_0,u_1\in\gamma^s_{L^{2}}(\Omega)$
satisfy 
\[
2\mathscr{H}(u;0)<\frac{M^2}{4}-1,
\]
\[
\left\|((-\Delta)^{3/4}u_0,(-\Delta)^{1/4}u_1)
\right\|^2_{\gamma^s_{\eta,L^2}\times \gamma^s_{\eta,L^2}}
\le R.
\]
Then the initial-boundary-value problem \eqref{EQ:B-Kirchhoff} 
admits a unique solution
$u$ in the class 
\[
C^1\left([0,\infty);\gamma_{L^2}^s(\Omega)
\right).
\]
\end{thm}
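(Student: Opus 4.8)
The plan is to transplant the proof of Theorem~\ref{thm:Gevrey thm} to the exterior domain $\Omega$ by systematically replacing the Euclidean Fourier transform with the generalised Fourier transform $\mathscr{F}$ from Wilcox and Shenk. The first step is to establish the analogue of Proposition~\ref{prop:G-Colombini} for the linear equation $\pa_t^2 u - c(t)^2 A u = 0$ with Dirichlet data, where $A$ is the self-adjoint Dirichlet Laplacian. Applying $\mathscr{F}$ and using property (iii) (that $\mathscr{F}$ diagonalises $A$), the problem decouples into the family of ODEs $\pa_t^2 v + c(t)^2|\xi|^2 v = 0$ with $v(0,\xi) = (\mathscr{F}u_0)(\xi)$, $\pa_t v(0,\xi) = (\mathscr{F}u_1)(\xi)$ — exactly the ODEs appearing in the proof of Proposition~\ref{prop:G-Colombini}. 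The microlocal energy $E(t,\xi)$, the cutoff coefficient $c_*(t,\xi)$, and the weight $k(t,\xi)$ are defined verbatim; the pointwise inequality $E'(t,\xi) \le 0$ is purely about the ODE and carries over unchanged, and integrating over $\xi \in \Rn$ (now with $\mathcal{E}(t) = \int_\Rn E(t,\xi)\,d\xi$) together with the generalised Parseval identity (property (ii)) yields the same energy estimate \eqref{EQ:G-interval} with the $\dot H^\sigma$ and $\gamma^s_{\eta,L^2}$ norms interpreted via $\mathscr{F}$ and the functional calculus $A^{\sigma/2}$.

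The second step is to set up the same fixed-point scheme. For $c(t) \in \mathscr{K}(T)$ one solves the linear problem \eqref{EQ:B-Kirchhoff} with the coefficient frozen, obtains $v$ in the class \eqref{EQ:Gevrey data} (with $-\Delta$ replaced by $A$), and defines $\tilde c(t) = (1 + \|\nabla v(t)\|_{L^2(\Omega)}^2)^{1/2}$, giving the map $\Theta\colon c \mapsto \tilde c$. The crucial computation $2\tilde c(t)\tilde c'(t) = 2\Re(A^{3/4}v(t), A^{1/4}\pa_t v(t))_{L^2(\Omega)}$ follows from differentiating under the integral and using the equation $\pa_t^2 v = c(t)^2 A v$ and self-adjointness of $A$; here the Dirichlet boundary condition $v(t,\cdot) \in H^1_0(\Omega)$ is what legitimises moving $A^{1/2}$ across the inner product, and this is exactly where the Gevrey compatibility condition on $u_0, u_1$ is used to guarantee that $A^k v(t) \in H^1_0(\Omega)$ for all $k$ and all $t$, propagating the boundary condition to all the fractional powers. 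With this identity in hand, Proposition~\ref{prop:Ge-Chiave} goes through with $\|\cdot\|_{\dot H^\sigma(\Omega)}$ in place of $\|\cdot\|_{\dot H^\sigma(\Rn)}$, since its proof only invoked the energy estimate and the Hamiltonian conservation from Lemma~\ref{lem:Energy}; the latter again just needs multiplying the equation by $\pa_t u$ and integrating over $\Omega$, with the boundary term vanishing because $u(t,\cdot)|_{\pa\Omega} = 0$.

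The third step is the compactness of $\mathscr{K}(T)$ in $L^\infty_{\mathrm{loc}}([0,T))$ and the continuity of $\Theta$, both of which are literally unchanged: the compactness argument (Arzel\`a--Ascoli applied to the equi-continuity estimate coming from the bound on $c_k'$) does not see the domain at all, and the continuity argument estimates the energy $\mathscr{E}(w_k(t))$ of the difference $w_k = v_k - v$, which solves a linear Dirichlet problem with source $(c_k(t)^2 - c(t)^2)A v_k$; differentiating $\mathscr{E}(w_k)$, using \eqref{EQ:G-interval} and Gronwall, gives $\nabla v_k \to \nabla v$ and $\pa_t v_k \to \pa_t v$ in $L^\infty_{\mathrm{loc}}([0,T);L^2(\Omega))$, hence $\tilde c_k \to \tilde c$. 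Then the Schauder--Tychonoff theorem produces a fixed point $c = \Theta(c)$, so the linear solution $v$ solves \eqref{EQ:B-Kirchhoff}, and conservation of $\mathscr{H}$ plus the choice of $\eta_0(M,R,T)$ gives $u \in C^1([0,\infty);\gamma^s_{L^2}(\Omega))$; uniqueness follows as before.

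The main obstacle — and the only genuinely new point compared to the $\Rn$ case — is the interplay between the Dirichlet boundary condition and the fractional powers $A^{3/4}, A^{1/4}$ of $A$ that appear in the formula for $\tilde c'(t)$ and in the regularity class \eqref{EQ:Gevrey data}. One must check carefully that the generalised Fourier transform $\mathscr{F}$ genuinely intertwines $A^{\sigma/2}$ with multiplication by $|\xi|^\sigma$ on the full range of $\sigma$ needed, that the eigenfunctions $\psi_+(x,\xi)$ built from the resolvent $R(|\xi|^2 + i0)$ have enough regularity and decay near $\pa\Omega$ for the limiting absorption construction to be valid in the Gevrey-weighted spaces, and that the Gevrey compatibility condition is exactly the right hypothesis to keep $A^k v(t)$ in $H^1_0(\Omega)$ so the integration by parts defining $\tilde c'$ produces no boundary contribution; the analyticity of $\pa\Omega$ is what ensures the eigenfunctions and the frozen-coefficient linear solutions stay in the Gevrey class rather than merely $C^\infty$. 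Granting the cited mapping properties of $\mathscr{F}$ (Shenk, Wilcox, Mochizuki), everything else is a routine transcription of Sections~\ref{sec:2}--\ref{sec:3}.
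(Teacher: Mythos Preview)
Your proposal is correct and follows exactly the route the paper indicates: the paper does not give a separate detailed proof of Theorem~\ref{thm:Exterior} but simply states that one replaces the Euclidean Fourier transform by the generalised Fourier transform $\mathscr{F}$ (using its unitarity, Parseval, and diagonalisation properties (i)--(iii)) and repeats the argument of Theorem~\ref{thm:Gevrey thm} verbatim. Your write-up is in fact more detailed than the paper's own treatment, correctly identifying the Gevrey compatibility condition and the analyticity of $\pa\Omega$ as the ingredients that handle the boundary terms and keep the solution in the Gevrey class.
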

The constants can be made precise, in the same way as in Remark \ref{REM:eta0}.

\subsection{The case: $\Omega$ is a bounded domain}
Replacing Fourier transform by Fourier series expansion and 
applying exactly the same argument of the proof of Theorem 
\ref{thm:Gevrey thm}, 
we can prove a similar result 
for the initial-boundary value problem in $[0,\infty)\times \Omega$, 
where $\Omega$ is a bounded domain in $\R^n$ with analytic boundary $\pa \Omega$. 
Let $\{w_k\}_{k=1}^\infty$ be a complete orthonormal system 
of eigenfunctions of the operator $-\Delta$ whose domain 
is $H^2(\Omega)\cap H^1_0(\Omega)$, and let 
$\lambda_k$ be eigenvalues corresponding to $w_k$. 
Namely, $\{w_k,\lambda_k\}$ satisfy the elliptic equations: 
\[
\left\{
\begin{aligned}
 -\Delta w_k&=\lambda_k w_k &  \quad \text{in $\Omega$,}\\
 w_k&=0 & \quad \quad \text{on $\pa\Omega$.}
\end{aligned}\right.
\] 
Then $(w_k,w_\ell)_{L^2(\Omega)}=\delta_{k\ell}$ and 
\[
0<\lambda_1\le \lambda_2\le \cdots \le\lambda_k \le \cdots 
\quad \text{and $\lambda_k\to \infty$,}
\] 
where $(\phi,\psi)_{L^2(\Omega)}$ stands for the inner product of $\phi$ and $\psi$ 
in $L^2(\Omega)$. We say that $f\in H^\sigma(\Omega)$ for real $\sigma$ if 
\[
\sum_{k=1}^\infty \lambda^{2\sigma}_k
\left|(f,w_k)_{L^2(\Omega)} \right|^2<\infty,
\]
and $f\in \gamma_{L^2}^s(\Omega)$ for $s\ge1$ if 
and only if there exists a constant $\eta>0$ such that
\[
\sum_{k=1}^\infty e^{\eta\lambda^{1/s}_k}
\left|(f,w_k)_{L^2(\Omega)} \right|^2<\infty.
\]
The class $\gamma_{L^2}^s(\Omega)$ is endowed with the inductive limit topology. \\

Then we have:

\begin{thm} \label{thm:A}
Assume that $\Omega$ is a bounded domain in $\Rn$ with 
analytic boundary $\pa \Omega$.
Let $T>0$ and $s>1$. 
Suppose that there exist $M>2$, $R>0$ 
and $\eta>\eta_0(M,R,T)$ for a sufficiently large 
$\eta_0(M,R,T)$ such that 
the functions
$u_0,u_1\in\gamma^s_{L^{2}}(\Omega)$
satisfy 
\[
2\mathscr{H}(u;0)<\frac{M^2}{4}-1,
\]
\[
\left\|((-\Delta)^{3/4}u_0,(-\Delta)^{1/4}u_1)
\right\|^2_{\gamma^s_{\eta,L^2}\times \gamma^s_{\eta,L^2}}
\le R.
\]
Then the initial-boundary-value problem \eqref{EQ:B-Kirchhoff} 
admits a unique solution
$u$ in the class 
\[
C^1\left([0,\infty);\gamma_{L^2}^s(\Omega)\right).
\]
\end{thm}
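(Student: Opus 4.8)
The plan is to run the same three-step scheme as in the proof of Theorem~\ref{thm:Gevrey thm}, with the Fourier transform on $\Rn$ replaced by the expansion in the Dirichlet eigenfunctions $\{w_k\}$ of $\Omega$. Write $A=-\Delta$ with domain $H^2(\Omega)\cap H^1_0(\Omega)$, put $\widehat f(k)=(f,w_k)_{L^2(\Omega)}$, and use the spectral theorem to identify $A$ with multiplication by $\lambda_k$ on $\ell^2(\N)$; then $\dot H^\sigma(\Omega)=A^{-\sigma/2}L^2(\Omega)$, the Gevrey norm is $\sum_k e^{\eta\lambda_k^{1/s}}|\widehat f(k)|^2$, and the Kirchhoff coefficient is $\int_\Omega|\nabla v(t,x)|^2\,dx=\|A^{1/2}v(t)\|^2_{L^2(\Omega)}=\sum_k\lambda_k|\widehat v(t,k)|^2$ (integration by parts being legitimate because $v(t)\in H^2(\Omega)\cap H^1_0(\Omega)$ along the solution). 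Since $\lambda_k\to\infty$, membership of $u_0,u_1$ in $\gamma^s_{L^2}(\Omega)$ forces $\widehat{u_j}(k)$ to decay faster than any power of $\lambda_k$, so $A^mu_j=\sum_\ell\lambda_\ell^m\widehat{u_j}(\ell)w_\ell$ converges in $H^1_0(\Omega)$ for every $m$; hence the Gevrey compatibility condition holds automatically and all manipulations with $\Delta$ and with the boundary condition are justified. The analyticity of $\partial\Omega$ enters only to ensure that $\gamma^s_{L^2}(\Omega)$ is a genuine Gevrey class and that the underlying linear theory is available; it plays no further role.

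First I would establish the analogue of Proposition~\ref{prop:G-Colombini} for the linear problem $\partial_t^2v-c(t)^2\Delta v=0$ in $(0,T)\times\Omega$ with $v|_{\partial\Omega}=0$, including the existence and uniqueness of the solution in the stated regularity class. Expanding $v(t,x)=\sum_k v_k(t)w_k(x)$ decouples the equation into the ordinary differential equations $v_k''(t)+c(t)^2\lambda_k v_k(t)=0$, which is exactly the frequency-$\xi$ equation occurring in the proof of Proposition~\ref{prop:G-Colombini} with $|\xi|^2$ replaced by $\lambda_k$. I would then copy that proof with $|\xi|$ replaced by $\sqrt{\lambda_k}$ and the integral $\int_{\Rn}(\cdot)\,d\xi$ replaced by the sum $\sum_{k\ge1}(\cdot)$: the cut-off coefficient $c_*(t,k)$, the weight $\alpha(t,k)$, and the energy $E(t,k)$ are defined as there, and the pointwise-in-$k$ differential inequality $\partial_tE(t,k)\le0$ follows from the identical computation. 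Summing over $k$ yields the energy estimate \eqref{EQ:G-interval} with the Gevrey weight $e^{\eta'\lambda_k^{1/s}}$ on the left-hand side; because $\lambda_k\ge\lambda_1>0$ there is no accumulation of low frequencies, so this part is in fact easier than on $\Rn$, while the high-frequency localization is unchanged.

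With the linear estimate in hand the nonlinear scheme transfers essentially verbatim. Lemma~\ref{lem:Energy} holds with the same proof (multiply \eqref{EQ:B-Kirchhoff} by $\partial_tu$ and integrate over $\Omega$). Defining $\mathscr{K}(T)$ as before and $\Theta:c(t)\mapsto\tilde c(t)$ with $\tilde c(t)=(1+\|A^{1/2}v(t)\|^2_{L^2(\Omega)})^{1/2}$, the proof of Proposition~\ref{prop:Ge-Chiave} carries over: $\tilde c(0)\le M/2$ from \eqref{EQ:2-Ge-data}, the bound $|\tilde c'(t)|\le\|v(t)\|_{\dot H^{3/2}(\Omega)}\|\partial_tv(t)\|_{\dot H^{1/2}(\Omega)}$ is controlled by the linear estimate exactly as in \eqref{EQ:Gev-est}--\eqref{EQ:est-c}, and the continuation/bootstrap step using conservation of $\mathscr{H}$ is the same. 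Compactness of $\mathscr{K}(T)$ in $L^\infty_{\mathrm{loc}}([0,T))$ follows from Arzel\`a--Ascoli as in the completion of the proof of Theorem~\ref{thm:Gevrey thm}, and continuity of $\Theta$ follows from the same Gronwall estimate applied to $w_k=v_k-v$, now invoking the bounded-domain linear estimate; the Schauder--Tychonoff theorem then gives a fixed point $c=\Theta(c)$, i.e.\ a solution $u\in C^1([0,\infty);\gamma^s_{L^2}(\Omega))$, and uniqueness follows by the same argument applied to the difference of two solutions. The one point that requires care — rather than any new idea — is verifying that the unitary equivalence $L^2(\Omega)\cong\ell^2(\N)$ transports the fractional powers of $A$, the Gevrey weights and the nonlinear functional consistently, so that the constants $\eta_0(M,R,T)$, $K_0$, and the exponents appearing in the energy estimate are literally the same as in the $\Rn$ case; this bookkeeping, and not any analytic difficulty, is what I expect to be the main thing to check.
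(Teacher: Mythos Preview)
Your proposal is correct and follows exactly the approach the paper indicates: the paper does not give a separate proof of Theorem~\ref{thm:A} but simply states that replacing the Fourier transform by the Fourier series expansion in the Dirichlet eigenfunctions and repeating the argument of Theorem~\ref{thm:Gevrey thm} yields the result. Your sketch in fact supplies more detail than the paper (the mode-by-mode reduction of Proposition~\ref{prop:G-Colombini}, the compatibility condition, and the bookkeeping for the constants), all of which is in line with the paper's intended argument.
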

Again, the constants can be made precise, in the same way as in Remark \ref{REM:eta0}.



\begin{thebibliography}{10}
\bibitem{Arosio-analytic} 
A.~Arosio, {\em Asymptotic behaviour as $t\rightarrow +\infty $ of the 
solutions of linear hyperbolic equations with coefficients discontinuous in 
time $($on a bounded domain$)$}, J. Differential Equations {\bf 39} (1981), 
291--309. 



\bibitem{Arosio}
A. Arosio and S. Spagnolo, 
\textit{Global solutions to the Cauchy problem for a 
nonlinear hyperbolic equation,} Nonlinear partial differential 
equations and their applications, Coll\`ege de France seminar, 
Vol. VI (Paris, 1982/1983), pp. 1--26, Res. Notes in Math., 109, 
Pitman, Boston, MA, 1984.

%
\bibitem{Bernstein}
S. Bernstein, {\em Sur une classe d'\'equations fonctionnelles aux 
d\'eriv\'ees partielles,} Izv. Akad. Nauk SSSR Ser. Mat. {\bf 4} (1940), 17--27.

%
 \bibitem{Callegari} 
  E. Callegari and R. Manfrin, {\em Global existence for nonlinear hyperbolic 
 systems of Kirchhoff type,} J. Differential Equations {\bf 132} (1996), 
 239--274. 

%
\bibitem{Colombini}
   F. Colombini, D. Del Santo and T. Kinoshita, 
\textit{Well-posedness of a hyperbolic equation with non-Lipschitz coefficients,} 
Ann. Scuola Norm. Sup. Pisa, Cl. Sci. (5), {\bf 1} (2002), 327--358.

%
 \bibitem{Dancona-Invent} 
P. D'Ancona and S. Spagnolo, {\em Global solvability for the degenerate 
Kirchhoff equation with real analytic data,} Invent. Math. {\bf 108} (1992), 247-262.
 
 \bibitem{Dancona1} 
P. D'Ancona and S. Spagnolo, {\em A class of nonlinear hyperbolic problems 
with global solutions,} Arch. Rational Mech. Anal. {\bf 124} (1993), 201--219.

%
 \bibitem{Dancona2} 
P. D'Ancona and S. Spagnolo, {\em Nonlinear perturbations of the Kirchhoff 
equation,} Comm. Pure Appl. Math. {\bf 47} (1994), 1005--1029.

%
 \bibitem{Dancona3} 
P. D'Ancona and S. Spagnolo, {\em Kirchhoff type equations depending on a 
small parameter,} Chin. Ann. of Math. {\bf 16B} (1995), 413--430.

%
 \bibitem{Dunford} 
N. Dunford and J.T. Schwartz, {\em Linear Operators,} Part I, Wiley-Intersciences, 
1988.

\bibitem{Local Ghisi}
M. Ghisi and M. Gobbino, {\em Kirchhoff equations in generalized Gevrey spaces: 
local existence, global existence, uniqueness,} 
Rend. Istit. Mat. Univ. Trieste {\bf 42} (2010), supp., 89--110.

%
\bibitem{Ghisi}
  M. Ghisi and M. Gobbino, {\em Kirchhoff equation from quasi-analytic to 
spectral-gap data,} 
  Bull. London Math. Soc. {\bf 43} (2011), 374--385. 

\bibitem{Der-Ghisi}
M. Ghisi and M. Gobbino, {\em Derivative loss for Kirchhoff equations with
non-Lipschitz nonlinear term,} Ann. Sc. Norm. Super. Pisa Cl. Sci. (5) 8 (2009), no. 4, 
613--646.

%
\bibitem{Greenberg} 
  J.M. Greenberg and S.C. Hu,  
{\em The initial-value problem for a stretched string,} 
Quart. Appl. Math. {\bf 38} (1980), 289--311.

%
\bibitem{Heiming} 
  C. Heiming (=C. Kerler), 
  {\em Mapping properties of generalized Fourier transforms 
and applications to Kirchhoff equations}, 
NoDEA Nonlinear Differential Equations Appl. {\bf 7} (2000), 389--414. 




\bibitem{Hirosawa-NA}
F. Hirosawa, {\em Degenerate Kirchhoff equation in ultradifferentiable class}, Nonlinear Anal.,
Ser. A: Theory Methods, {\bf 48} (2002), 77--94.

%
\bibitem{Hirosawa}
F. Hirosawa, {\em Global solvability for Kirchhoff equation in special classes 
of non-analytic functions,} 
J. Differential Equations {\bf 230} (2006), 49--70.



\bibitem{Kajitani}
K. Kajitani, {\em The global solutions to the Cauchy problem for multi-dimensional 
Kirchhoff equation,} Advance in Phase Space Analysis of Partial Differential 
Equations, (A. Bove, D. Del Santo and M. K. V. Murthy eds.), 
Progress in Nonlinear Differential Equations and Their Applications, 
vol. 78, Birkh\"auser, Boston 2009, pp. 141--153.



\bibitem{Kajitani-Pisa}
K. Kajitani and K. Yamaguti, 
\textit{On global analytic solutions of the degenerate 
Kirchhoff equation,}
Ann. Scuola Norm. Sup. Pisa Cl. Sci. (4) {\bf 21} (1994), 
279--297.


%
\bibitem{Kirchhoff}
G. Kirchhoff, \textit{Vorlesungen \"uber Mechanik,} 
Teubner, Leibzig, 1876.

%
\bibitem{KoNa}
T. Kotake and M. Narasimhan, 
\textit{Regularity theorems for fractional powers of a linear elliptic operator,} 
Bull. Soc. Math. France {\bf 90} (1962), 449--471.

%
 \bibitem{Manfrin1} 
   R. Manfrin,  
   {\em On the global solvability of symmetric hyperbolic 
 systems of Kirchhoff type,} Discrete Contin. Dynam. 
 Systems {\bf 3} (1997), 91--106.


\bibitem{Manfrin-JDE} 
R. Manfrin,  
{\em On the global solvability of Kirchhoff equation for non-analytic initial data,} 
J. Differential Equations {\bf 211} (2005), 38--60.


\bibitem{Ext-Matsuyama} T. Matsuyama, 
{\em Global well-posedness for the exterior initial-boundary value problem to 
the Kirchhoff equation,} J. Math. Soc. Japan {\bf 64} (2010), 1167--1204.

\bibitem{Rend-Matsuyama} T. Matsuyama, 
{\em The Kirchhoff equation with global solutions in unbounded 
domains,} Rend. Istit. Mat. Univ. Trieste. {\bf 42} (2010), suppl., 125--141.

\bibitem{Isaac} T. Matsuyama and M. Ruzhansky,
{\em Global well-posedness of the Kirchhoff equation and Kirchhoff systems,}
Analytic Methods in Interdisciplinary Applications, 
Springer Proceedings in Mathematics \& Statistics, Vol. 116, Springer, 2015. 

\bibitem{MR-MN}T. Matsuyama and M. Ruzhansky,
{\em Scattering for strictly hyperbolic systems with time-dependent coefficients,} 
Math. Nachr. {\bf 286} (2013), 1191--1207.

\bibitem{MR-Liouville}T. Matsuyama and M. Ruzhansky, 
{\em Global well-posedness of Kirchhoff systems,} 
J. Math. Pures Appl. {\bf 100} (2013), 220--240.


%
\bibitem{Mochizuki}K.~Mochizuki, 
\textit{Spectral and Scattering Theory for Second Order Elliptic 
Differential Operators in an Exterior Domain,} 
Lecture Notes Univ. Utah, Winter and Spring 1972.

%
%

\bibitem{Nishida} T. Nishida, 
{\em A note on the nonlinear vibrations of the elastic string,} Mem. Fac. 
Engrg. Kyoto Univ. {\bf 33} (1971), 329--341.


%
\bibitem{Nishihara}
K. Nishihara, {\em On a global solution of some quasilinear hyperbolic equation,} 
Tokyo J. Math. {\bf 7} (1984), 437--459.

%
\bibitem{Pohozhaev}
S.I. Poho$\check{\mathrm{z}}$haev, {\em On a class of quasilinear hyperbolic equations,} 
Math. USSR Sb. {\bf 25} (1975), 145--158.

%
\bibitem{Racke}R. Racke, 
{\em Generalized Fourier transforms and global, small solutions to 
Kirchhoff equations}, Asymptotic Analysis {\bf 58} (1995), 85--100.

\bibitem{Rzymowski} W. Rzymowski,
{\em One-dimensional Kirchhoff equation,} Nonlinear Analysis
{\bf 48} (2002), 209--221.


\bibitem{Shenk}
N.A. Shenk II, 
\textit{Eigenfunction expansions and scattering theory 
for the wave equation in an exterior region},
  Arch. Rational Mech. Anal. \textbf{21} (1966), 120--150.

\bibitem{Wilcox}
C.H. Wilcox, {\em Scattering Theory for the D'Alembert Equation 
in Exterior Domains}, Lecture Notes Math. {\bf 442} (1975).

%
 \bibitem{Yamazaki} 
  T. Yamazaki, {\em Scattering for a quasilinear hyperbolic equation of 
Kirchhoff type,} J. Differential Equations \textbf{143} (1998), 1--59. 

 \bibitem{Yamazaki1} 
  T. Yamazaki, {\em Global solvability for the Kirchhoff equations 
in exterior domains of dimension larger than three,} Math. Methods Appl. Sci. 
{\bf 27} (2004), 1893--1916.

\bibitem{Yamazaki2} 
  T. Yamazaki, {\em Global solvability for the Kirchhoff equations 
in exterior domains of dimension three,} J. Differential Equations 
{\bf 210} (2005), 290--316.


\end{thebibliography}
\end{document}